\documentclass[11pt]{article}
\parskip 1.75\parskip plus 3pt minus 1pt

\usepackage{epsfig}
\usepackage{amsmath}
\usepackage{amssymb}
\usepackage{boxedminipage}
\usepackage{algorithm}

\usepackage{tikz}
\usepackage{tkz-berge}
\usepackage{tkz-graph}

\setlength{\oddsidemargin}{-.20in}
\setlength{\evensidemargin}{-.20in} \setlength{\textwidth}{6.4in}
\setlength{\topmargin}{-0.1in} \setlength{\textheight}{8.7in}
\pagenumbering{arabic}
\newcommand {\be}[1]{\begin{equation}\label{#1}}
\newcommand {\ee}{\end{equation}}
\newcommand {\bea}{\begin{eqnarray}}
\newcommand {\eea}{\end{eqnarray}}

\newcommand{\qed}{\hfill \rule{7pt}{7pt}}

\newtheorem{theorem}{Theorem}
\newtheorem{lemma}{Lemma}

\newtheorem{prop}{Proposition}
\newtheorem{coro}{Corollary}

\begin{document}

\title{Strategyproof and Consistent Rules for Bipartite Flow Problems}

	\author{
Shyam Chandramouli\thanks{IEOR Department, Columbia University, New York,
NY; {\tt sc3102@columbia.edu}} and
Jay\
Sethuraman\thanks{IEOR Department, Columbia University, New York,
NY;
{\tt jay@ieor.columbia.edu}}}

%\date{March 2013}

\maketitle

\begin{abstract}
We continue the study of Bochet et al.~\cite{bim,bims} and Moulin and Sethuraman~\cite{ms,mslc} on fair
allocation in bipartite networks. In these models, there is a moneyless market, in which
a non-storable, homogeneous commodity is reallocated between agents with single-peaked
preferences. Agents are either suppliers or demanders. While the egalitarian rule of Bochet et al.~\cite{bim,bims}  satisfies \emph{pareto optimality, no envy and strategyproof} , it is not consistent. On the other hand, the work of Moulin and Sethuraman~\cite{ms,mslc} is related to \emph{consistent} allocations and rules that are extensions of the uniform rule. We bridge the two streams of work by introducing the \emph{edge fair} mechanism which is both \emph{consistent} and \emph{groupstrategyproof}. On the way, we explore the ``price of consistency" i.e. how the notion of consistency is fundamentally incompatible with certain notions of fairness like Lorenz Dominance and No-Envy. The current work also introduces the idea of strong invariance as desideratum for \emph{groupstrategyproofness} and generalizes the proof of Chandramouli and Sethuraman~\cite{cs} to a more broader class of mechanisms. 
Finally, we conclude with the study of the edge fair mechanism in a transshipment model where the strategic agents are on the links connecting different supply/demand locations. 
\end{abstract}

\newpage

\section{Introduction}
\label{intro}
We study the problem of fair division of a maximum flow in a capacitated bipartite network. This model
generalizes a number of matching and allocation problems that have been studied extensively over
the years, motivated by applications in school choice, kidney exchange etc. The common feature in these
application contexts is that the associated market  is moneyless, so that fairness is achieved by
equalizing the allocation {\em as much as possible}. This last caveat is to account for additional
considerations, such as Pareto efficiency and strategyproofness, that may be part of the planner's objective.

Specifically, we are given a bipartite network $G = (S \cup D, E)$, and we think of $S$ as the set of supply
nodes and $D$ as the set of demand nodes. Each arc $(i,j) \in E$ connects a supply node $i$ to a demand
node $j$, and has capacity $u_{ij} \geq 0$. There is a single commodity (the resource) that is available at
the supply nodes and needs to be transferred to the demand nodes: we assume that supply node $i$ has
$s_i$ units of the resource, and demand node $j$ requires $d_j$ units of it. The capacity of an arc $(i,j)$
is interpreted as an upper bound on the direct transfer from supply node $i$ to demand node $j$. The
goal is to satisfy the demands ``as much as possible'' using the available supplies, while also respecting
the capacity constraints on the arcs.

A well-studied special case of our problem is that of allocating a single resource (or allocating the
resource available at a single location) amongst a set of agents with varying (objectively verifiable) claims on it. 
This is the special case when there is a single supply node that is connected to every one of the demand
nodes in the network by an arc of large-enough capacity. If the
sum of the claims of the agents exceeds the amount of the resource available, the problem
is a standard rationing problem (studied in the literature as ``bankruptcy'' problems or ``claims''
problems). There is an extensive literature devoted to such problems that has resulted in a
thorough understanding of many natural methods including the {\em proportional} method,
the {\em uniform gains} method, and the {\em uniform losses} method. A different view of
this special case is that of allocating a single resource amongst agents with single-peaked
preferences over their net consumption. Under this view, studied by Sprumont~\cite{spr},
Thomson~\cite{thomson} and many others, the goal is to design a mechanism for allocating
the resource that satisfies appealing efficiency and equity properties, while also eliciting
the preferences of the agents truthfully. The {\em uniform rule}, which is essentially an adaptation 
of the uniform gains method applied to the reported peaks of the agents, occupies a central
position in this literature: it is strategy-proof (in fact, group strategy-proof), and finds an
envy-free allocation that Lorenz dominates every other efficient allocation; furthermore,
this rule is also {\em consistent}. (We will define consistency, Lorenz dominance, etc. precisely
in Section~\ref{s:maxflow}.) A natural two-sided version of Sprumont's model has agents initially 
endowed with some amount of the resource, so that agents now fall into two categories: someone
endowed with less than her peak is a potential demander, whereas someone endowed with more than
her peak is a potential supplier. The simultaneous presence
of demanders and suppliers creates an opportunity to trade, and the obvious adaptation of
the uniform rule gives their peak consumption to agents on the short
side of the market, while those on the long side are uniformly rationed (see 
\cite{KPS}, \cite{BJ}). This is again equivalent to a standard rationing problem because
the nodes on the short side of the market can be collapsed to a single node.
The model we consider generalizes this by assuming that the resource can only be transferred 
between certain pairs of agents. Such constraints are typically logistical (which supplier can
reach which demander in an emergency situation, which worker can
handle which job request), but could be subjective as well (as when a
hospital chooses to refuse a new patient by declaring red
status). This complicates
the analysis of efficient (Pareto optimal) allocations, because short demand
and short supply typically coexist in the same market. 

The general model we consider in this paper has been the subject of much recent
research and was first formulated by Bochet et al.~\cite{bim, bims}. The authors
work with a bipartite network in both papers and assume that each node is populated
by an agent with single-peaked preferences over his consumption of the resource: thus,
each supply node has an ``ideal'' supply (its peak) quantity, and each demand node has
an ideal demand. These preferences are assumed to be private information, and Bochet
et al.~\cite{bim, bims} propose a clearinghouse mechanism that collects from each agent
only their ``peaks'' and picks Pareto-optimal transfers with respect to the reported peaks.
Further, they show that their mechanism is strategy-proof in the sense that it is a dominant
strategy for each agent to report their peaks truthfully. While the models in the two papers
are very similar, there is also a critical difference: in~\cite{bims}, the authors require that
no agent be allowed to send or receive any more than their peaks, whereas in~\cite{bim} the
authors assume that the {\em demands} must be satisfied exactly (and so some supply nodes
will have to send more than their peak amounts). The mechanism of Bochet et al.---the {\em egalitarian}
mechanism---generalizes
the uniform rule, and finds an allocation that Lorenz dominates all Pareto efficient allocations.
Later, Chandramouli and Sethuraman~\cite{cs} show that the egalitarian mechanism is in fact
group strategyproof: it is a dominant strategy for any group of agents (suppliers or demanders) 
to report their peaks truthfully \footnote{Szwagrzak~\cite{sz1} genrealize the proof methodology of Chandramouli and Sethuraman~\cite{cs} to establish that all separably convex rules are group strategyproof }. 
Szwagrzak~\cite{sz1,sz2,sz3} expands the study of allocation rules in these 
networked economies
by introducing broader class of mechanisms with various fairness properties. His work also 
develops alternative characterizations of these mechanisms (in p
articular, the 
egalitarian mechanism) and provides a unified view of the allocation problem on networks.
Szwagrzak~\cite{sz1} studies the property of contraction invariance of an allocation rule: when 
the set of feasible allocations contracts such that the optimal allocation is still 
in this smaller set, then the allocation rule should continue to select the same allocation.
He shows that the egalitarian rule is contraction invariant. 
These results suggest that the egalitarian mechanism may be the
correct generalization of the uniform rule to the network setting. However, it is fairly easy to show
that the egalitarian mechanism is not consistent: if the link from a supply node $i$ and demand node $j$
is dropped, and $s_i$ and $d_j$ are adjusted accordingly, applying the egalitarian mechanism to the
reduced problem will not necessarily give the same allocation to the agents. Motivated by this
observation, Moulin and Sethuraman~\cite{ms, mslc} study rules for network rationing
problems that extend a given rule for a standard rationing problem while preserving consistency
and other natural axioms. In particular, they propose a family of rules that generalize the
uniform rule to the bipartite network setting. While they are able to show that their extension
satisfies consistency, it is not known if any of these rules is strategyproof.

Our main contribution in this paper is a new {\em group strategy-proof} mechanism (the ``edge-fair'' mechanism)
that is a consistent extension
of the uniform rule. Our proof shows that for any Pareto efficient mechanism, group strategyproofness is equivalent to a property that we call {\em strong invariance} that is often straightforward to verify. (In particular, the group strategy-proofness of the egalitarian mechanism that we established in an earlier paper also follows immediately, even if one
works with a capacitated model.)
Along the way we show that consistency imposes very severe restrictions: for instance,
no consistent rule can find allocations that are envy-free, even in the limited sense introduced by Bochet
et al.~\cite{bims} for such problems. The mechanism we propose does not find the Lorenz
optimal allocation, but we show that no consistent mechanism can. 
We consider a related model where the supplies and demands at the nodes are given, but that each edge is
controlled by an independent agent with single-peaked preferences on the amount transferred along that edge.
The planner still wishes to implement a maximum flow (it is now a design constraint), and the goal is to divide this
reasonably among the edges of the network.
For this model we show that a Lorenz optimal allocation need not exist, but that our mechanism can still
be applied and finds a reasonable division of the max-flow.

The rest of the paper is organized as follows: in Section~\ref{s:maxflow} we consider the standard model
of maximizing the total flow in a capacitated bipartite network.  We state the well-known Gallai-Edmonds
decomposition, and describe the edge-fair algorithm that selects a particular max-flow for any given
problem. An easy argument shows that the edge-fair algorithm makes a consistent selection of max-flows
across related problems. Section~\ref{s:model1} considers the model in which agents are located on the nodes of the
network and have single-peaked preferences over their allocations---the equivalence of group strategy-proofness
and strong invariance, and the fact that the edge-fair rule satisfies strong invariance are the key results in
this section. In section~\ref{s:model2} we turn to the problem in which agents are on the edges of the network,
and study the implications of consistency. 
\section{Maximum Flows and the Edge-Fair Algorithm}
\label{s:maxflow}

\subsection{Model}
We consider the problem of transferring a single commodity
from a set $S$ of suppliers to a set $D$ of demanders using
a set $E$ of edges.  Supplier $i$ has $s_i \geq 0$ units of
the commodity, and demander $j$ wishes to consume $d_j \geq 0$.
Associated with each edge is a distinct
supplier-demander pair: the edge $e = (i,j)$ links supplier
$i$ to demander $j$, and has a non-negative, possibly infinite,
capacity $u_{ij}$. Transfer of the commodity is allowed between
supplier $i$ and demander $j$ only if $(i,j) \in E$, in which case
at most $u_{ij}$ units of the commodity can be transferred along
this edge\footnote{Equivalently, we could assume that an edge
exists between every supplier $i$ and every demander $j$,
but that $u_{ij} = 0$ for all $(i,j) \not \in E$.}.
The goal is to find an ``optimal'' transfer of the commodity from the
suppliers to the demanders.
We let $G = (S \cup D, E)$ be the natural bipartite graph and we speak
of the {\em problem} $(G,s,d,u)$.

We use the following notation. For any subset $T\subseteq S$,
the set of demanders compatible with the suppliers in $T$ is $%
f(T)=\{j\in D| (i,j) \in E, \;\; i \in T \; \}$. 
Similarly, the set of suppliers compatible with the demanders 
in $C \subseteq D$ is $g(C)=\{i\in S| (i,j) \in E, \;\; j \in C \}$. 
We abuse notation and say $f(i)$ and $g(j)$ instead of
$f(\{i\})$ and $g(\{j\})$ respectively.
For any subsets $T \subseteq S$, $C \subseteq D$, $x_T :=
\sum_{i \in T} x_i$ and $y_C := \sum_{j \in C} y_j$.

A transfer of the commodity from $S$ to $D$ is realized by a flow $\varphi$,
which specifies the amount of
the commodity transferred from supplier $i$ to demander $j$ using
the edge $(i,j) \in E$. 
The flow $\varphi$ induces an allocation
vector for each supplier and each demander as follows:
\begin{equation}
\text{for all }i\in S:x_{i}(\varphi )=\sum_{j\in f(i)}\varphi _{ij};\text{
for all }j\in D:y_{j}(\varphi )=\sum_{i\in g(j)}\varphi _{ij}  \label{1}
\end{equation}%
The flow $\varphi$ is {\em feasible} if (i) $\varphi_{ij} \leq u_{ij}$ for all $(i,j) \in E$ and $\varphi_{ij} = 0$ for all
$(i,j) \not \in E$;
 (ii) $x_i(\varphi) \leq s_i$ for all $i \in S$; and
(iii) $y_j(\varphi) \leq d_j$ for all $j \in D$.
Let ${\cal F}(G,s,d,u)$ be the set of feasible flows for the problem $(G,s,d,u)$.
A feasible flow $\varphi^*$ is a maximum flow if 
$$\varphi^* \in \arg \max_{\varphi \in {\cal F}(G,s,d,u)} \sum_{i \in S} x_i(\varphi).$$
Let
${\cal F}^*(G,s,d,u)$ be the set of maximum flows for the problem $(G,s,d,u)$.
For reasons that will be clearer later, we shall focus mostly on finding a maximum flow for
any given problem. As a result, it is important to understand the set ${\cal F}^*(G,s,d,u)$,
which we turn to next.

\paragraph{The Gallai-Edmonds Decomposition.}
The problem under consideration is the well-known problem of finding a maximum flow
in a capacitated bipartite network. The following result characterizes the structure
of maximum flows and is essentially a version of the Gallai-Edmonds decomposition.
It can proved by a straightforward application of the max-flow min-cut theorem.

\begin{lemma}
\label{lem1}
There exists a partition $S_{+} , S_{-}$ of $S$, and a partition $D_{+}, D_{-}$ of $D$
such that
the flow $\varphi$ with net transfers $x,y$
is a maximum flow  if and only if 
\begin{equation}
 \varphi_{ij} = u_{ij} \hspace{2mm} \forall ij \in G(S_{-},D_{-}), \hspace{2mm} x_{i} = s_{i} \hspace{2mm} \forall i \in S_{+}, \hspace{2mm} y_{j}=d_{j} \hspace{2mm} \forall j \in D_{+} \\ 
\end{equation}
\end{lemma}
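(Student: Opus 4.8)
The plan is to derive the claimed structure directly from the max-flow min-cut theorem applied to the standard $s$-$t$ network associated with $(G,s,d,u)$. First I would build the auxiliary network $N$: add a source $\sigma$ with an arc $(\sigma, i)$ of capacity $s_i$ to each supplier $i$, add a sink $\tau$ with an arc $(j,\tau)$ of capacity $d_j$ from each demander $j$, and keep the original edges $(i,j)\in E$ with capacity $u_{ij}$. Feasible flows in $\mathcal{F}(G,s,d,u)$ correspond exactly to $\sigma$-$\tau$ flows in $N$, and maximum flows correspond to maximum $\sigma$-$\tau$ flows, with $\sum_{i\in S} x_i(\varphi)$ equal to the flow value. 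Let $v^*$ be the max-flow value.

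**Defining the partition.** I would fix a minimum cut $(\mathcal{A}, \mathcal{B})$ in $N$ with $\sigma\in\mathcal{A}$, $\tau\in\mathcal{B}$, and define $S_{-} = S\cap\mathcal{A}$, $S_{+} = S\cap\mathcal{B}$, $D_{-} = D\cap\mathcal{B}$, $D_{+} = D\cap\mathcal{A}$. (The ``$+$/$-$'' labeling is chosen so that $S_+$ are the suppliers whose capacity arc from $\sigma$ is \emph{not} cut, etc.) Because the cut has finite capacity, no original edge $(i,j)$ with $u_{ij}=\infty$ crosses from $\mathcal{A}$ to $\mathcal{B}$; the finitely-capacitated edges crossing $\mathcal{A}$-to-$\mathcal{B}$ are precisely those in $G(S_-,D_-)$, the arcs $(\sigma,i)$ crossing are those with $i\in S_+$, and the arcs $(j,\tau)$ crossing are those with $j\in D_+$. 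Hence the min-cut capacity is $u(S_-,D_-) + s_{S_+} + d_{D_+}$, where $u(S_-,D_-)=\sum_{ij\in G(S_-,D_-)} u_{ij}$.

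**The equivalence.** For the ``only if'' direction: given a max-flow $\varphi$, its value $v^*$ equals the min-cut capacity, and since $\varphi$ saturates every cut arc of a minimum cut (standard complementary-slackness / flow-across-a-cut argument: the flow across any $\sigma$-$\tau$ cut equals $v^*$, and is at most the cut capacity with equality only if every forward arc is saturated and every backward arc carries zero flow), we get $\varphi_{ij}=u_{ij}$ on $G(S_-,D_-)$, $x_i=s_i$ for $i\in S_+$, and $y_j=d_j$ for $j\in D_+$. For the ``if'' direction: suppose $\varphi$ is feasible and satisfies the three saturation conditions. Then the flow across the cut $(\mathcal{A},\mathcal{B})$ is at least (forward saturation) $u(S_-,D_-)+s_{S_+}+d_{D_+}$ minus zero backward flow $= v^*$; since $v^*$ is the maximum possible value of any feasible flow, $\varphi$ must be a max-flow. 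A subtle point I would be careful about is that the partition must be \emph{the same} for all max-flows — this is fine because we fix one minimum cut once and for all, and the argument above shows every max-flow is characterized by saturation relative to that fixed cut.

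**Main obstacle.** The only real subtlety is handling infinite capacities cleanly: one must argue that a minimum cut never cuts an $\infty$-capacity edge (else its capacity is $+\infty$, and any feasible flow — e.g. the zero flow if all $s_i$ or $d_j$ can be zero, or more carefully the existence of \emph{some} finite cut, namely $\mathcal{A}=\{\sigma\}$ with capacity $s_S<\infty$) contradicts minimality, so $G(S_-,D_-)$ contains only finitely-capacitated edges and $u(S_-,D_-)<\infty$. Everything else is the routine max-flow min-cut bookkeeping; no genuinely hard step remains.
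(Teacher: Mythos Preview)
Your approach via max-flow/min-cut is the same as the paper's, but there is a genuine gap in the ``if'' direction. You fix an \emph{arbitrary} minimum cut $(\mathcal{A},\mathcal{B})$ and then assert that if $\varphi$ satisfies the three saturation conditions, the flow across the cut equals the cut capacity ``minus zero backward flow.'' But you never argue that backward flow is zero, and in fact it need not be. The backward arcs of your cut are exactly the edges in $G(S_+,D_+)$ (edges from $S\cap\mathcal{B}$ to $D\cap\mathcal{A}$), and nothing in the three stated conditions forces $\varphi_{ij}=0$ on those edges.

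Here is a concrete failure: take $S=\{a,b\}$, $D=\{c,d\}$, $s_a=s_b=d_c=d_d=1$, edges $(a,c),(b,c),(b,d)$ all of infinite capacity. The max-flow value is $2$. The set $\mathcal{A}=\{\sigma,a,c\}$ is a minimum cut (capacity $s_b+d_c=2$), giving $S_-=\{a\}$, $S_+=\{b\}$, $D_+=\{c\}$, $D_-=\{d\}$. The three conditions become: $G(S_-,D_-)=\emptyset$ (vacuous), $x_b=1$, $y_c=1$. Now the feasible flow $\varphi_{bc}=1$, $\varphi_{ac}=\varphi_{bd}=0$ satisfies all three conditions yet has value $1$, not $2$: the unit on the backward edge $(b,c)$ kills you. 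So with this choice of cut the equivalence is simply false.

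The paper handles this by \emph{not} taking an arbitrary minimum cut. It takes the min-cut whose supplier set is inclusion-minimal (calling it $\underbar{X}$, and setting $S_-=\underbar{X}$), and then defines $D_+$ not as $D\cap\mathcal{A}$ but as $f(S_-)\cap\{\,j: y_j(\varphi)=d_j\text{ for every }\varphi\in\mathcal{F}^*\,\}$. This extremal choice is what pins down a canonical partition and is exactly the missing ingredient in your argument. You should either adopt that choice (and then redo the ``if'' direction carefully, paying attention to $G(S_+,D_+)$), or add a separate argument that the particular cut you pick has no backward edges carrying positive flow under the three conditions---which will again force you to select the cut non-arbitrarily.
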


%\noindent
%{\bf Proof:}
\begin{proof}
Let $\lambda := (\lambda_i)_{i \in S}$ be non-negative.
Construct the following network $G(\lambda)$: introduce a source $s$ and
a sink $t$; arcs of the form $(s,i)$ for each supplier $i$ with capacity $\lambda_i$,
arcs of the form $(j,t)$ for each demander $j$ with capacity $d_j$; an arc of capacity $u_{ij}$
from supplier $i$ to demander $j$ if supplier $i$ and demander $j$ share a link.
Consider now a maximum $s$-$t$ flow $\varphi$ in the network $G(s)$.
By the max-flow min-cut theorem, there is a cut $C$ (a cut is a subset 
of nodes that contains the source $s$ but not the sink $t$) whose capacity 
is equal to that of the max-flow. Let $X$ be the set of suppliers in $C$ and $Y$
be the set of demanders in $C$.
If the min-cut is not unique, it is again well-known 
(see~\cite{lp88}) that there is a min-cut
with the largest $X$ (largest in the sense of inclusion), and a min-cut with the
smallest $X$ (again in the sense of inclusion). Call these 
sets $\overline{X}$ and $\underbar{X}$.
Define
$S_{-} := \underbar{X}$ and $S_{+} := S \setminus S_{-}$; and define
$D_{+} = f(S_{-}) \cap \{j |y_{j} (\varphi) = d_{j}, \;\; \forall \varphi \in {\cal F}^*\}$ and $D_{-} = D \setminus D_{+}$.
We note that the partition is uniquely determined for each problem.\footnote{It is easy to check that every 
supplier in $\overline{X} \setminus \underbar{X}$ will transfer his entire
supply in {\em all} maximum flows.}

Let $C$ be the cut that has precisely $\underbar{X}$ as its set of suppliers and $Y$ as its set of demanders. We claim that
$Y \subseteq f(\underbar{X})$. For otherwise, there is a 
supplier $j \in Y \setminus f(X)$ who contributes $d_j$ to the capacity
of the cut $C$, and omitting $j$ from $C$ would reduce this capacity by $d_j > 0$, resulting in a smaller capacity cut.
Moreover, by the max-flow min-cut theorem, every edge $(i,j)$ with $i \in C$ and $j \not \in C$ must carry flow
equal to its capacity, and that the value of the max-flow is precisely the sum of the capacites of such edges. Thus,
$\varphi_{ij} = u_{ij}$ for
every edge $(i,j)$ with $i \in S_{-}$ and $j \in D_{-}$; the edge $(s,i)$ carries a flow of $s_i$ for each
supplier $i \in S_{+}$; and the edge $(j,t)$ carries a flow of $d_j$ for each demander $j \in D_{+}$.
The lemma now follows.
\end{proof}
\qed

\subsection{Axioms}

We briefly describe some key axioms that we want our rules to satisfy.

\paragraph{Edge consistency.}
The key axiom in our paper is {\em edge consistency} (or simply consistency, hereafter). Suppose we have
a rule $\varphi$ that picks a flow $z$ for a given problem $(G,s,d,u)$.
Fix an edge $(i,j) \in G$ connecting supply node $i$ and demand node $j$,
and define the reduced problem as follows: the new graph is $G' = G \setminus \{e\}$; the
supplies and demands of all the nodes other than $i$ and $j$ stay the same, $s'_{i} = s_i - z_{ij}$ and
$d'_{j} = d_j - z_{ij}$; and the capacities on the edges that remain stay the same. Let $z'$ be the
flow picked by the rule $\varphi$ for the reduced problem $(G',s',d', u)$. The rule $\varphi$ is
edge-consistent if $z = z'$ for every reduced problem $G'$ that can be obtained from $G$ by omitting
an edge. Observe that
$z$ restricted to the edges in $G'$ is a max-flow for the reduced problem, and edge-consistency requires
that the rule not ``reallocate'' the flow amongst the remaining edges if some edge is dropped from the
problem and the corresponding supplies and demands are adjusted in the obvious way. 

\paragraph{Continuity.}
The mapping $\varphi:(G,s,d,u) \rightarrow \mathbb{R}^{|E|}$ is jointly continuous in $s$, $d$, and $u$.
Roughly speaking, this simply says that a rule is continuous only if 
small changes in supplies, demands or edge-capacities result in small changes on the edge-flows picked
by the rule.

\paragraph{Symmetry.}
Consider any permutation $\pi$ of the supply nodes and a permutation $\sigma$ of the demand nodes. Define
the graph $G'$ as follows: $(i,j) \in G$ if and only if $(\pi(i), \sigma(j)) \in G'$. The
supplies $s'$ and demands $d'$ are defined analogously by permuting the original supplies
and demands according to the respective permutations. Likewise for the capacities.
A rule $\varphi$ is symmetric if and only if for every
$\pi$ and every $\sigma$, $z_{ij} = z'_{\pi(j),\sigma(j)}$ where $z$ and $z'$ are the outcomes of
the rule for the problems $(G,s,d,u)$ and $(G',s',d',u')$ respectively.

\subsection{The Edge-Fair Algorithm}
\label{sec:ef_rule}
Given two max-flows $\varphi$ and $\varphi'$ sorted in increasing order we say that $\varphi$ \emph{lexicographically dominates} $\varphi'$ if the first coordinate 
$k$ in which $\varphi$ and $\varphi'$ are not equal is such that 
$\varphi_{k} > \varphi'_{k}$. 
(Note that the $k$-th smallest entry in the flows $\varphi$ and $\varphi'$ may
be on different edges.)
The max-flow $\varphi$ is lex-optimal if it lexicographically dominates all other max-flows ${\cal F}^{*}(G,s,d,u)$. It is
clear that a lex-optimal flow exists and is unique.\footnote{The term lex-optimal flow is also used to mean a flow
whose induced allocation for the suppliers (or demanders) lex-dominates the induced allocation for the suppliers
in any other flow~\cite{meg74, meg77}.}
The edge-fair algorithm, formally described next,
finds this lex-optimal flow by solving a sequence of linear programming problems.

We fix a problem $(G,s,d)$ such that $s_{i}, d_{j} > 0 $ for all $i,j$ (clearly, if $s_{i} =0$ or $d_{j} = 0 $ we can 
ignore supplier $i$ or demander $j$ altogether). Let $E_0 := E$ and ${\cal F}_0 := {\cal F}^*(G,s,d,u)$, the set
of all max-flows for the given problem.
The edge-fair algorithm (or rule) proceeds iteratively, solving a linear programming 
problem in each step. In any iteration $t$, it starts with a candidate set
of max-flows ${\cal F}_t$, and a set of active edges $E_t$, and solves
the following linear programming problem:
$$
\max_{ \varphi \in {\cal F}_t} \; \bigg\{ \lambda_{t+1} \;  \bigg|  \; \varphi_{e} \geq \lambda_{t+1}, \;\; \forall e \in E_t \bigg\}.
$$
Suppose $\lambda^*_{t+1}$ is the optimal value of this linear programming problem. Then,
$${\cal F}_{t+1} \; = \; \bigg\{ \varphi \in {\cal F}_t \; | \; \varphi_{e} \geq \lambda^*_{t+1} \;\; \forall e \in E_t \bigg \},$$
and
$$E_{t+1} \; = \;  \bigg\{ e \in E_{t} \; | \; \varphi_{e} > \lambda^*_{t+1} \hspace{2mm} \text{for some} \hspace{2mm} \varphi \in {\cal F}_{t+1} \bigg \}.$$
The edges in $E_{t} \backslash E_{t+1}$ are declared {\em inactive}, and the algorithm proceeds to the next value of $t$ if any
active edges remain.
As at least one edge becomes inactive in each step, the algorithm terminates in $O(|E|)$ steps.

It is often useful to think about this algorithm in a different, but equivalent way. First, observe that any edge whose
flow is {\em fixed} in every max-flow will carry exactly this amount in the outcome of the edge-fair algorithm
as well. Thus, we focus only on those edges $(i,j)$ with the property that $0 < \varphi_{ij} < u_{ij}$ for {\em some}
flow $\varphi \in {\cal F}^*(G,s,d,u)$. In particular,
from the observations in proposition~\ref{prop:pareto} on the set of
Pareto Optimal solutions, we could fix $z_{ij} = u_{ij}$ for $ij \in G(S_{-},D_{+})$ and 
$z_{ij} = 0$ for $ij \in G(S_{+},D_{-})$ and remove
these edges from the network. 
The reduced problem now decomposes into 2 
disjoint components: one in which the suppliers are rationed (and every demander
gets what they ask for), and the other in which the demanders are rationed, but
each supplier sends his entire supply. As the algorithm is completely symmetric,
we simply describe it for the case of rationed demanders. In this case each supplier
will be allocated his peak in every max-flow; and any flow that respects edge-capacities 
while allocating each supplier his peak, while allocating each demander no more than
his peak is a max-flow. Thus, the linear programming problem that must be solved in
each step can be explicitly described: the only edges that need to be considered
are those between $S_{+}$ and $D_{-}$. 

\begin{eqnarray*}
%&\text{ Lex LP:} \hspace{2mm} \textbf{begin} \\
& \hspace{2mm} \text{Maximize} \hspace{3mm} \lambda_{t+1} \\
& \hspace{4mm} \text{subject to} \\
 &\hspace{10mm} \sum_{j} z'_{ij} = s_{i} \hspace{2mm} \forall \{i \in S_{+}, ij \in E_{t} \}  \\
 &\hspace{10mm} \sum_{i} z'_{ij} \leq d_{j} \hspace{2mm} \forall \{j \in D_{-}, ij \in E_{t} \} \\
&\hspace{10mm} \lambda_{t+1} \leq z'_{ij} \forall \{ij \in E_{t} \} \\
 &\hspace{10mm} u_{ij} \geq z'_{ij} \geq 0 
\end{eqnarray*}

Initially, every such edge is active, and
the algorithm tries to maximize the minimum amount carried by an active edge in
any max-flow.

\begin{theorem}
 \label{thm:edge_csy}
 The edge fair rule is symmetric, continuous, and consistent.
\end{theorem}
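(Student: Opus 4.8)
The plan is to verify each of the three properties separately, exploiting the characterization of the edge-fair outcome as the unique lex-optimal max-flow. Recall that the edge-fair rule selects the max-flow $\varphi$ whose sorted vector of edge-values lexicographically dominates that of every other max-flow; this characterization is invariant under the LP description and is the only handle I need.

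For symmetry, let $\pi$ be a permutation of $S$ and $\sigma$ a permutation of $D$, and let $(G',s',d',u')$ be the relabelled problem. The map $\varphi \mapsto \varphi'$ with $\varphi'_{\pi(i),\sigma(j)} := \varphi_{ij}$ is a bijection between ${\cal F}^*(G,s,d,u)$ and ${\cal F}^*(G',s',d',u')$ that preserves the multiset of edge-values, hence preserves the sorted vector and the lexicographic order. Therefore it maps the lex-optimal flow of the original problem to the lex-optimal flow of the relabelled problem, which is exactly the symmetry requirement. (Equivalently: the sequence of LPs solved by the algorithm is identical up to relabelling edges, since $E_t$, $\lambda^*_t$, and ${\cal F}_t$ are all defined by symmetric conditions.)

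For continuity, I would argue that the lex-optimal max-flow depends continuously on $(s,d,u)$. The value of the maximum flow is continuous (indeed concave and piecewise linear) in $(s,d,u)$, and the first threshold $\lambda^*_1$ — the optimal value of $\max\{\lambda : \varphi \in {\cal F}_0, \varphi_e \ge \lambda\ \forall e\in E_0\}$ — is the optimal value of a parametric LP whose feasible region varies continuously (in the Hausdorff sense) with the data, so $\lambda^*_1$ is continuous. The subtlety is that the active set $E_{t+1}$ can change discontinuously (an edge leaving $E_t$), so one cannot simply induct on the LPs naively. The clean way is: the lex-optimal flow is the unique minimizer over ${\cal F}^*(G,s,d,u)$ of a strictly Schur-convex separable potential — e.g. $\sum_e h(\varphi_e)$ for a suitably chosen strictly convex $h$ — whose unique minimizer over a continuously-varying compact convex set is continuous in the data by Berge's maximum theorem. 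I expect this reduction (choosing $h$ and checking its minimizer is exactly the lex-optimal flow) to be the one place requiring a small lemma; alternatively one can cite that lexicographic optimization over a continuously varying polytope yields a continuous selection. Either way, continuity follows.

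For consistency — the heart of the theorem — fix the problem $(G,s,d,u)$, let $z$ be the edge-fair flow, drop an edge $e=(i,j)$, and form the reduced problem $(G',s',d',u)$ with $s'_i = s_i - z_{ij}$, $d'_j = d_j - z_{ij}$. First note $z|_{G'}$ is feasible for the reduced problem and in fact a max-flow: any reduced-problem flow $\psi$ extends to $\psi + z_{ij}\cdot\mathbf{1}_e$ on $G$, which is feasible for the original, so the reduced max-flow value equals the original max-flow value minus $z_{ij}$; hence ${\cal F}^*(G',s',d',u)$ is exactly $\{\varphi|_{G'} : \varphi\in{\cal F}^*(G,s,d,u),\ \varphi_{ij}=z_{ij}\}$. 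It therefore suffices to show the lex-optimal flow of $G$ has $\varphi_{ij}=z_{ij}$ and that removing that coordinate from the sorted vector preserves lex-optimality among the remaining flows — but this is immediate from the definition of lexicographic dominance on sorted vectors: if $z$ sorted-lex-dominates every max-flow of $G$, then $z|_{G'}$ sorted-lex-dominates every flow in ${\cal F}^*(G',s',d',u)$, because each such flow is the restriction of some $\varphi\in{\cal F}^*(G)$ with $\varphi_{ij}=z_{ij}$, and deleting a common coordinate value from two sorted vectors that stand in sorted-lex order keeps them in sorted-lex order. The only genuine obstacle is the claim ${\cal F}^*(G') = \{\varphi|_{G'} : \varphi\in{\cal F}^*(G),\ \varphi_{ij}=z_{ij}\}$, i.e. that $z_{ij}$ is the forced value on edge $e$ in the reduced problem; this I would derive from Lemma~\ref{lem1} together with the Pareto structure in Proposition~\ref{prop:pareto} by a short case analysis on which of $S_+,S_-,D_+,D_-$ the endpoints $i,j$ lie in. Once that set-equality is in hand, consistency is a two-line consequence of the sorted-lexicographic characterization, which is why I regard that equality as the main step.
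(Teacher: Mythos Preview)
Your proposal is correct and mirrors the paper's argument: symmetry and continuity are dispatched as essentially definitional, and consistency is obtained by observing that the reduced problem's max-flows embed into the original's (via fixing $\varphi_{ij}=z_{ij}$) with $z$ remaining optimal in the smaller set. The paper phrases this last point algorithmically---``the set of feasible solutions to the reduced problem is a subset of the set of feasible solutions to the original problem at every stage, and $z$ is a member of both''---rather than via the sorted-lex characterization, but the content is the same.

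One remark on emphasis in your consistency argument. The set equality ${\cal F}^*(G') = \{\varphi|_{G'} : \varphi\in{\cal F}^*(G),\ \varphi_{ij}=z_{ij}\}$ that you flag as the ``genuine obstacle'' is in fact immediate from the extension map $\psi \mapsto \psi + z_{ij}\mathbf{1}_e$ you have already written down: that map is feasible and value-preserving (up to the additive constant $z_{ij}$), which gives both inclusions directly, and no case analysis on the Gallai--Edmonds partition is required. Conversely, the step you treat as ``immediate''---that deleting a common coordinate value from two sorted vectors preserves sorted-lexicographic dominance---is true but is the one place that actually merits a short justification (a brief case split on where the common value sits relative to the first coordinate at which the two sorted vectors differ). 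Neither point affects correctness; your plan goes through once the emphasis is swapped.
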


\textbf{Proof:} Symmetry follows because the rule is invariant, by definition, to permutations of supply nodes, demand nodes, and edge-capacities. Continuity is equally clear. 
Consistency is also immediate by the definition of the algorithm: we may assume that the edge $(i,j)$ that is dropped
to obtain the reduced problem is still present but carries a constant flow $z_{ij}$, where $z$ is the outcome chosen by
the rule for the original problem. Thus, the set of feasible solutions to the reduced problem is a subset of the set of
feasible solutions to the original problem at every stage of the algorithm: As the outcome $z$ for the original problem 
is a member of both sets, it will be chosen in both cases.
\qed

%-----------------figure ged

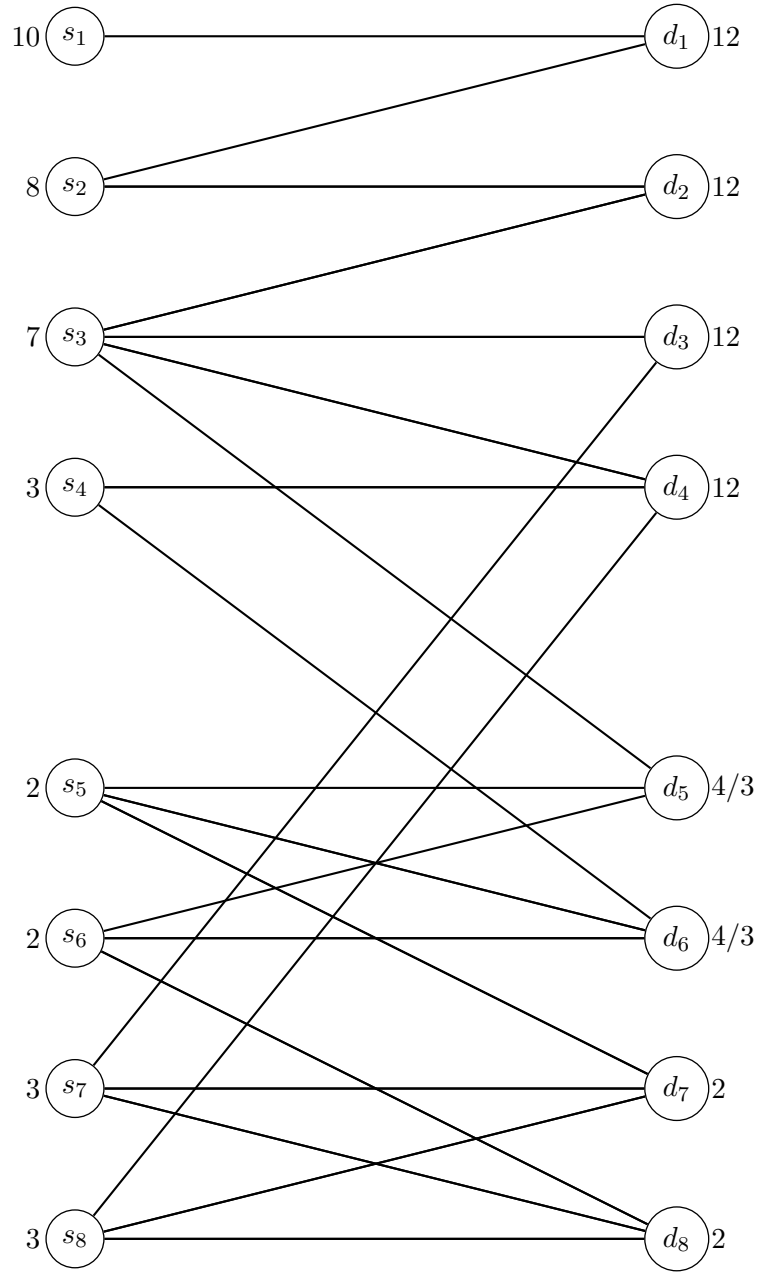
\begin{figure}
 \begin{center}
  \begin{tikzpicture}
   \GraphInit[vstyle=Normal]

%----suppliers bottom

	\Vertex[LabelOut,Lpos=180, x=0 ,y=0]{3}
 	\Vertex[x=0 ,y=0,Math,L=s_8]{s8}%VERTEXLABEL

 	\Vertex[LabelOut,Lpos=180, x=0 ,y=2]{3}
	\Vertex[x=0 ,y=2,Math,L=s_7]{s7}

	\Vertex[LabelOut,Lpos=180, x=0 ,y=4]{2}
	\Vertex[x=0 ,y=4,Math,L=s_6]{s6}

	\Vertex[LabelOut,Lpos=180, x=0 ,y=6]{2}
	\Vertex[x=0 ,y=6,Math,L=s_5]{s5}

%------- suppliers top
	\Vertex[LabelOut,Lpos=180, x=0 ,y=10]{3}
 	\Vertex[x=0 ,y=10,Math,L=s_4]{s4}%VERTEXLABEL

 	\Vertex[LabelOut,Lpos=180, x=0 ,y=12]{7}
	\Vertex[x=0 ,y=12,Math,L=s_3]{s3}

	\Vertex[LabelOut,Lpos=180, x=0 ,y=14]{8}
	\Vertex[x=0 ,y=14,Math,L=s_2]{s2}

	\Vertex[LabelOut,Lpos=180, x=0 ,y=16]{10}
	\Vertex[x=0 ,y=16,Math,L=s_1]{s1}

%---------------- demanders bottom

	\Vertex[LabelOut,Lpos=0, x=8,y=0]{2}
 	\Vertex[x=8 ,y=0,Math,L=d_8]{d8}%VERTEXLABEL

	\Vertex[LabelOut,Lpos=0, x=8 ,y=2]{2}
 	\Vertex[x=8 ,y=2,Math,L=d_7]{d7}%VERTEXLABEL

	\Vertex[LabelOut,Lpos=0, x=8 ,y=4]{4/3}
 	\Vertex[x=8 ,y=4,Math,L=d_6]{d6}%VERTEXLABEL

	\Vertex[LabelOut,Lpos=0, x=8 ,y=6]{4/3}
 	\Vertex[x=8 ,y=6,Math,L=d_5]{d5}%VERTEXLABEL

%----------------demanders top

	\Vertex[LabelOut,Lpos=0, x=8,y=10]{12}
 	\Vertex[x=8 ,y=10,Math,L=d_4]{d4}%VERTEXLABEL

	\Vertex[LabelOut,Lpos=0, x=8 ,y=12]{12}
 	\Vertex[x=8 ,y=12,Math,L=d_3]{d3}%VERTEXLABEL

	\Vertex[LabelOut,Lpos=0, x=8 ,y=14]{12}
 	\Vertex[x=8 ,y=14,Math,L=d_2]{d2}%VERTEXLABEL

	\Vertex[LabelOut,Lpos=0, x=8 ,y=16]{12}
 	\Vertex[x=8 ,y=16,Math,L=d_1]{d1}%VERTEXLABEL

%----------------------------

	 \Edges(s8,d8,s8,d7,s8,d4)
         \Edges(s7,d7,s7,d8,s7,d3)
         \Edges(s6,d8,s6,d6,s6,d5)
       	 \Edges(s5,d7,s5,d6,s5,d5)

         \Edges(s4,d4,s4,d6)
         \Edges(s3,d4,s3,d3,s3,d2,s3,d5)
         \Edges(s2,d2,s2,d1)
       	 \Edges(s1,d1)

	\end{tikzpicture}
   \caption{Gallai-Edmonds Decomposition and the Edge Fair Allocation} 
   \label{fig:ged}
  \end{center}
\end{figure}

%--------------------

\paragraph{Example.}
We illustrate the algorithm on the problem shown in Figure~\ref{fig:ged} with 8 supply nodes and 8 
demand nodes.
All edges have infinite capacity except for the edges $(s7, d3)$ and $(s8, d4)$, which have capacity 0.5 each.
It is clear that these two capacitated edges must carry 0.5 unit of flow each in {\em every} max-flow, so
their flow can be fixed; by consistency, we could omit these edges from further consideration, and adjust 
the supplies
at $s7$ and at $s8$ and the demands at $d3$ and at $d4$ down by 0.5 unit each. Similarly,
the edges $(s3,d5)$ and $(s4,d6)$ carry no flow in any max-flow, and so can be omitted as well. The
problem now decomposes into two components: one involving the first 4 supply and demand nodes, where
the demand nodes are rationed in any max-flow; and the other involving the last 4 supply and demand nodes,
where the supply nodes are rationed in any max-flow.

First consider the problem involving the first four supply and demand nodes. Each supply node sends all
its supply, whereas each demand node receives at most what it wants.
The edge-fair algorithm applied to this problem gives the
following flow: first $z_{21} = 2$; then $z_{32} = z_{33} = z_{34} = 7/3$; then $z_{44} = 3$, after
which $z_{22} = 6$, and finally $z_{12} = 10$. The resulting allocation for the demanders in this
problem is $(12, 25/3, 7/3, 16/3)$; recall that demand nodes $3$ and $4$ also get 0.5 units of flow from
suppliers $s7$ and $s8$ respectively, so the final allocation for the demand nodes is $(12, 25/3, 17/6, 35/6)$.

Now consider the edge-fair algorithm applied to the last 4 supply and demand nodes. Here the supply nodes
are rationed whereas the demand is exactly met. It is easy to check that the
the edge-fair rule sends a flow of $2/3$ on each edge in this component so that the resulting allocation
for the supply nodes is $(2, 2, 4/3, 4/3)$; as the last 2 supply nodes also send 0.5 units of flow to the
other component, the final allocation for these supply nodes is $(2, 2, 11/6, 11/6)$.

To summarize, the edge-fair algorithm applied to this example results in an allocation of $(10, 8, 7, 3, 2, 2, 11/6, 11/6)$
for the supply nodes and $(12, 25/3, 17/6, 35/6, 4/3, 4/3, 2, 2)$ for the demand nodes. In contrast, it can
be verified that the egalitarian allocation results in an allocation of $(10, 8, 7, 3, 23/12, 23/12, 23/12, 23/12)$ for
the supply nodes, and $(10, 8, 11/2, 11/2, 4/3, 4/3, 2, 2)$ for the demand nodes.
This also highlights an important distinction between the edge-fair allocation and the egalitarian one: in our
example, demand nodes 3 and 4 have identical demands, and it is {\em possible} to give them the same allocation,
as shown by the Egalitarian allocation; the edge-fair rule, however, treats these demand nodes differently. In
particular, demand node 4 is better off under the edge-fair rule because of its improved connectivity.

\section{Model 1: Agents only on nodes}
\label{s:model1}

In this section we consider the version of the problem where
the nodes of the network are populated by agents. Specifically,
each supply node $i$ is occupied by a {\em supplier} $i$ and
each demand node $j$ is occipied by a {\em demander} $j$.
Thus, we our problem becomes one of transferring a single commodity
from a set $S$ of suppliers to a set $D$ of demanders using
the set $E$ of edges. The edge $e$ has a capacity $u_e$ that is known
to all the agents.
A transfer of the commodity from $S$ to $D$ is 
realized by a flow $\varphi$,
which specifies the amount of
the commodity transferred from supplier $i$ to demander $j$ using
the edge $(i,j) \in E$. The flow $\varphi$ induces an allocation
vector for each supplier and each demander as follows:
\begin{equation}
\text{for all }i\in S:x_{i}(\varphi )=\sum_{j\in f(i)}\varphi _{ij};\text{
for all }j\in D:y_{j}(\varphi )=\sum_{i\in g(j)}\varphi _{ij}  \label{1}
\end{equation}%
As we shall see in a moment, suppliers and demanders only care about
their {\em net} transfers, and not on how these transfers are distributed
across the agents on the other side.

Each supplier $i$ has \emph{single-peaked preferences}\footnote{%
Writing $P_{i}$ for agent $i$'s strict preference, we have for every $%
x_{i},x_{i}^{\prime }$: $x_{i}<x_{i}^{\prime }\leq s_{i}$ $\Rightarrow
x_{i}^{\prime }P_{i}x_{i},$ and $s_{i}\leq x_{i}<x_{i}^{\prime }$ $%
\Rightarrow x_{i}P_{i}x_{i}^{\prime }$.} $R_{i}$ (with corresponding
indifference relation $I_{i}$) over her \textit{net transfer} $x_{i}$, with
peak $s_{i}$, and each demander $j$ has single-peaked preferences $R_{j}$ ($%
I_{j}$) over her net transfer $y_{j}$, with peak $d_{j}$. 
We write $\mathcal{%
R}$ for the set of single peaked preferences over $\mathbb{R}_{+}$, and $%
\mathcal{R}^{S\cup D}$ for the set of preference profiles.

We think of the graph $G$ as fixed, and focus our attention on mechanisms
that elicit preferences from the agents and maps the reported preference
profile to a flow. For reasons that will be clear later, we focus
on allocation rules that are {\em peak} only: the flow (and hence the 
induced allocation vector for the suppliers and demanders) depends on
the reported preference profile of the agents only through their peaks.
Thus it makes sense to talk of the {\em problem} $(s,d)$: this 
emphasizes the fact that the {\em peaks} of the agents are private 
information whereas the other part of the problem (specifically, the
graph $G$ and the edge capacities $u$) are known.
To summarize: suppliers and demanders
report their peaks; the allocation rule is applied to
the graph $G$ with edge-capacities $u$, and the data $(s,d)$
where $s$ and $d$ are the reported peaks of the suppliers, 
and demanders. Our focus will be on mechanisms in
which no agent has an incentive to misreport his peak.

A mechanism is said to be {\em strategyproof}
if for any graph $G$ it is a weakly dominant strategy for an agent to 
truthfully report their peak. It is 
{\em group strategyproof} if for
any graph $G$ it is a weakly dominant strategy for any
coalition of agents to truthfully report their peaks. 

\subsection{Efficiency and Equity}

\underline{{\bf Pareto Optimality:}}
A feasible net transfer $(x,y)$ as defined in the previous section is Pareto Optimal if there
is no other allocation $(x',y')$ such that every agent is weakly better off and atleast
one agent is strictly better off in it. In mathematical terms, if $R_{i}$ and $I_{i}$
denote the preference and indifference relations respectively for agent $i$, then 
\begin{equation}
 \{ \forall \hspace{2mm} i,j: \hspace{2mm} x_{i}'R_{i}x_{i} \hspace{2mm} \text{and} \hspace{2mm} y_{j}'R_{j}y_{j}\} \implies \{ \forall \hspace{2mm} i,j: \hspace{2mm} x_{i}'I_{i}x_{i} \hspace{2mm} \text{and} \hspace{2mm} y_{j}'I_{j}y_{j}\}
\end{equation}

The following result shows that set of Pareto optimal transfers for peak-only rules
is intimately related to the set of max-flows.

\begin{prop}
\label{prop:pareto}
Fix the economy $(G,R)$. Let $S_{+}, S_{-}$ and $D_{+}, D_{-}$ be the Gallai-Edmonds
decomposition applied to the network $G$ with edge capacities given by $u$, supplies 
given by the peaks of the suppliers and the demands given by the peaks of 
the demanders. Then:
\begin{itemize}
\item[(a)] If the flow $\varphi$ implements Pareto optimal net transfers 
$(x,y)$, then:
\begin{equation}
  ij \in G(S_{-},D_{-}) \implies \phi_{ij} = u_{ij} ; \hspace{2mm}  
ij \in G(S_{+},D_{+} \cup (f(S_{-}) \cap D_{-}) ) \implies \phi_{ij} = 0 
\end{equation}
\item[(b)]
The transfers $(x,y)$ induced by a feasible flow $\varphi$ are Pareto optimal if
and only if
\begin{equation}
 x \geq s \hspace{2mm} \text{on} \hspace{2mm} S_{+}, \hspace{2mm} y\leq d \hspace{2mm} \text{on} \hspace{2mm} D_{-} \hspace{2mm} \text{and} \hspace{2mm} x_{S_{+}} = y_{D_{-}} - \phi(S_{-},D_{-})
\end{equation}
\begin{equation}
  x \leq s \hspace{2mm} \text{on} \hspace{2mm} S_{-}, \hspace{2mm} y\geq d \hspace{2mm} \text{on} \hspace{2mm} D_{+} \hspace{2mm} \text{and} \hspace{2mm} x_{S_{-}} = y_{D_{+}} + \phi(S_{-},D_{-})
\end{equation}
where $\phi(S_{-},D_{-})$ is the net flow from component $S_{-}$ to $D_{-}$. From earlier discussions, $\phi(S_{-},D_{-}) = \sum_{i\in S_{i}, j\in D_{-}} u_{ij}$
\end{itemize}
\end{prop}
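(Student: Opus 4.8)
The plan is to reduce the whole statement to the single fact that, for peak‑only rules here, \emph{a feasible flow induces Pareto optimal net transfers if and only if it is a maximum flow}, and then to read off (a) and (b) by combining the Gallai--Edmonds partition of Lemma~\ref{lem1} with flow‑conservation accounting. For the equivalence: feasibility forces $x_i(\varphi)\le s_i$ and $y_j(\varphi)\le d_j$, so single‑peakedness makes every agent strictly prefer a larger net transfer on the feasible range; hence $(x',y')$ weakly Pareto‑dominates $(x,y)$ iff $x'\ge x$ and $y'\ge y$ coordinatewise, with some strict inequality for a strict improvement. If $\varphi$ is a max flow and a feasible $\varphi'$ strictly dominated it, then $\sum_i x_i(\varphi')=\sum_j y_j(\varphi')>\sum_j y_j(\varphi)=\sum_i x_i(\varphi)$, contradicting maximality. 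Conversely, if $\varphi$ is feasible but not maximum, take an augmenting $s$--$t$ path in the residual of the associated single‑source/single‑sink network (source $s$ joined to supplier $i$ with capacity $s_i$, demander $j$ joined to sink $t$ with capacity $d_j$, original edges with capacities $u$); pushing $\varepsilon>0$ along it produces a feasible flow in which the first supplier and last demander on the path strictly gain --- their incident source/sink arcs had residual capacity, so they were strictly below their peaks --- while all other net transfers are unchanged, so $(x,y)$ is not Pareto optimal.

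With the equivalence established, fix $\varphi$ inducing Pareto optimal transfers, so $\varphi\in\mathcal F^*$. The identity $\varphi_{ij}=u_{ij}$ for $ij\in G(S_-,D_-)$ is then exactly Lemma~\ref{lem1}. For the vanishing claims of (a), recall from the proof of Lemma~\ref{lem1} that $C:=\{s\}\cup S_-\cup D_+$ is a minimum cut of the single‑source/single‑sink network, so complementary slackness of max‑flow/min‑cut forces every maximum flow to saturate the arcs leaving $C$ and to place zero flow on the arcs entering $C$. In particular, every edge $ij$ with $i\in S_+\subseteq\overline C$ and $j\in D_+$ enters $C$, hence $\varphi_{ij}=0$; the remaining edges listed in (a), those from $S_+$ into $f(S_-)\cap D_-$, are forced to carry no flow by the same extremal (smallest/largest) min‑cut structure that defines the partition in Lemma~\ref{lem1}.

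Part (b) is then bookkeeping on top of (a). For ``$\Rightarrow$'': since $\varphi\in\mathcal F^*$, Lemma~\ref{lem1} gives $x=s$ on $S_+$ and $y=d$ on $D_+$ (so in particular $x\ge s$ on $S_+$ and $y\ge d$ on $D_+$), while feasibility gives $x\le s$ on $S_-$ and $y\le d$ on $D_-$. By (a) no flow crosses from $S_+$ to $D_+$, so the whole outflow $x_{S_+}$ of $S_+$ lands in $D_-$, and the flow from $S_-$ into $D_-$ is $\varphi(S_-,D_-)=\sum_{i\in S_-,\,j\in D_-}u_{ij}$; balancing inflow at $D_-$ gives $x_{S_+}=y_{D_-}-\varphi(S_-,D_-)$, and the mirror count at $D_+$ (fed only from $S_-$) gives $x_{S_-}=y_{D_+}+\varphi(S_-,D_-)$. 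For ``$\Leftarrow$'': these two equalities together with feasibility force $x_{S_+}=s_{S_+}$ and $y_{D_+}=d_{D_+}$, so $\mathrm{val}(\varphi)=x_{S_+}+x_{S_-}=s_{S_+}+d_{D_+}+\varphi(S_-,D_-)$, which is exactly the capacity of the cut $C$ above; hence $\varphi$ is a maximum flow, and by the equivalence it induces Pareto optimal transfers.

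The routine parts are the flow/efficiency equivalence and the accounting in (b). The step that needs care --- and the one I expect to be the real obstacle --- is the structural input in the second step: verifying that the Gallai--Edmonds partition is realized by a minimum cut with sides $\{s\}\cup S_-\cup D_+$ and its complement, and that in \emph{every} maximum flow the edges named in (a) are pinned (at $u_{ij}$ on $G(S_-,D_-)$, at $0$ on the rest), which is precisely where the uniqueness remark and the smallest/largest min‑cut argument in the proof of Lemma~\ref{lem1} do their work.
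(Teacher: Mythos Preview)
The paper does not supply a proof of this proposition; it is stated and immediately followed by the discussion of $PO^*$. So there is no ``paper proof'' to compare against, and your plan --- reduce Pareto optimality of feasible flows to being a max flow, then read off the structure from Lemma~\ref{lem1} and min-cut complementary slackness --- is exactly the natural route and is essentially what the authors have in mind when they cite the proposition later. Your argument for the equivalence ``Pareto optimal $\Leftrightarrow$ max flow'' and the flow-accounting for the two balance equations in (b) are clean and correct.

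There is, however, a genuine gap precisely where you flag one. The second clause of (a) asserts $\varphi_{ij}=0$ for every edge from $S_+$ into $f(S_-)\cap D_-$, and you dispatch this with ``the same extremal (smallest/largest) min-cut argument.'' That hand-wave cannot be completed, because the claim is false as written: in the paper's own worked example (Figure~\ref{fig:ged}) we have $s_3\in S_+$, $d_3\in D_-$, and $d_3\in f(S_-)$ since $(s_7,d_3)\in E$ with $s_7\in S_-$; yet the edge-fair max flow puts $z_{33}=7/3>0$. So the statement of (a) contains a typo (note also that $D_+\subseteq f(S_-)$ by the definition in the proof of Lemma~\ref{lem1}, so $D_+\cup(f(S_-)\cap D_-)=f(S_-)$, which makes the stated claim implausibly strong). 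What \emph{is} true, and what your complementary-slackness argument actually proves, is $\varphi_{ij}=0$ for $ij\in G(S_+,D_+)$; this is all that is needed for the balance equations in (b) and for the later uses of the proposition. You should say this explicitly rather than trying to argue the stronger (and incorrect) claim.

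A second place that deserves more than a pointer is your assertion that $C=\{s\}\cup S_-\cup D_+$ is itself a minimum cut. Lemma~\ref{lem1} constructs a min cut with supplier side $\underline X=S_-$ and some demander set $Y\subseteq f(S_-)$, and defines $D_+$ separately via $D_+=f(S_-)\cap\{j:y_j(\varphi)=d_j\ \forall\varphi\in\mathcal F^*\}$; the identification $Y=D_+$ is not immediate. You are right that this is ``the real obstacle,'' but you should actually close it: once you know $\varphi_{ij}=0$ on $G(S_+,D_+)$ and $\varphi_{ij}=u_{ij}$ on $G(S_-,D_-)$ for every max flow (the first from any min cut with supplier side $S_-$, the second from Lemma~\ref{lem1}), the value of the max flow equals $s_{S_+}+d_{D_+}+\sum_{i\in S_-,j\in D_-}u_{ij}$, which is exactly the capacity of $C$, so $C$ is a min cut and the ``$\Leftarrow$'' direction of (b) goes through as you wrote it.
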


We are particularly interested in Pareto optimal flows and transfers in which
no supplier or demander is allocated more than their peak: such solutions are
Pareto optimal for {\em any} single-peaked preferences of the agents as long
as the peaks are $s$ and $d$ respectively. Following Bochet et al., we call
this set $PO^*$ and note that $(x,y) \in PO^*$ if and only if $(x,y)$ is
Pareto optimal, $x \leq s$, and $y \leq d$. In particular, $(x,y) \in PO^*$ if
and only if
\begin{equation}
 x = s \hspace{2mm} \text{on} \hspace{2mm} S_{+}, \hspace{2mm} y\leq d \hspace{2mm} \text{on} \hspace{2mm} D_{-} \hspace{2mm} \text{and} \hspace{2mm} x_{S_{+}} = y_{D_{-}} - \phi(S_{-},D_{-})
\end{equation}
\begin{equation}
  x \leq s \hspace{2mm} \text{on} \hspace{2mm} S_{-}, \hspace{2mm} y = d \hspace{2mm} \text{on} \hspace{2mm} D_{+} \hspace{2mm} \text{and} \hspace{2mm} x_{S_{-}} = y_{D_{+}} + \phi(S_{-},D_{-})
\end{equation}

In the rest of the section, by a Pareto optimal solution we mean a flow inducing
net transfers $(x,y) \in PO^*$. We proceed now to discussions related to fairness. \\

\underline{\textbf{No Envy:}} A rule $(x,y) \in  \mathcal{F}(G,s,d,u)$ satisfies \emph{No Envy} if for any preference profile $R \in \mathcal{R}^{S \cup D}$
and $i,j \in
S$ such that $x_{j}P_{i}x_{i}$, there exists no $(x',y')$ such that 
\begin{eqnarray}
&x_{k} = x'_{k} \ \text{for all} \ k \in S \backslash \{ i,j \} ; \ y_{l} = y'_{l} \ \text{for all} \ l \in D \ \text{and} \\
&x'_{i}P_{i}x_{i} 
\end{eqnarray}
and a similar statement when we interchange the role of suppliers and demanders. \\

\underline{\textbf{Equal Treatment of Equals:}} A rule $(x,y) \in  \mathcal{F}(G,s,d,u)$ satisfies \emph{Equal Treatment of Equals} if for any preference profile $R \in \mathcal{R}^{S \cup D}$ and ${i,j} \in S$ such that $s_{i} = s_{j}$, if $x_{j} \neq
x_{i}$ then there exists no $(x',y')$ such that  
\begin{eqnarray}
 &x_{k} = x'_{k} \ \text{for all} \ k \in S \backslash \{ i,j \} ; \ y_{l} = y'_{l} \ \text{for all} \ l \in D \ \text{and} \\
&|x'_{i} - x'_{j}| < |x_{j} - x_{i}|
\end{eqnarray}
and a similar statement when we interchange the role of suppliers and demanders. 

If an allocation rule always results in a Pareto optimal allocation and satisfies No Envy, then 
it also satisfies Equal Treatment of Equals (Refer to Proposition 5 in Bochet et al.~\cite{bims}). 

The egalitarian rule of Bochet et al.~\cite{bims} is a selection from the Pareto set $PO^*$ as is the edge-fair allocation rule.  They  also show that the egalitarian rule is envy-free but the  inconsistency of the rule follows from figure~\ref{fig:cons-noenvy} where we remove the node $d_{1}$ from the network on the left. The egalitarian allocation on the reduced 
network improves the allocation of $s_{2}$ by sending 1 unit of flow on the edge $s_{2} - d_{2}$

%------------ Example (Egalitarian inconsistency and envy freeness)
\begin{figure}
 \begin{center}
  \begin{tikzpicture}
   \GraphInit[vstyle=Normal]

%--------------------fig - left

%----suppliers bottom

	\Vertex[LabelOut,Lpos=180, x=0 ,y=0]{3}
 	\Vertex[x=0 ,y=0,Math,L=s_2]{s2}%VERTEXLABEL

 	\Vertex[LabelOut,Lpos=180, x=0 ,y=2]{3}
	\Vertex[x=0 ,y=2,Math,L=s_1]{s1}

%---------------- demanders bottom

	\Vertex[LabelOut,Lpos=0, x=4,y=0]{2}
 	\Vertex[x=4 ,y=0,Math,L=d_2]{d2}%VERTEXLABEL

	\Vertex[LabelOut,Lpos=0, x=4 ,y=2]{2}
 	\Vertex[x=4 ,y=2,Math,L=d_1]{d1}%VERTEXLABEL

     \Edges[label={$z_{11}=2$}](s1,d1)

     \Edges[label={$z_{22}=2$}](s2,d2)
     \Edges(s2,d1)

%----------------------------

%--------------------fig - middle

%----suppliers bottom

	\Vertex[LabelOut,Lpos=180, x=6 ,y=2]{3}
 	\Vertex[x=6 ,y=2,Math,L=s_1]{s1}%VERTEXLABEL

	\Vertex[LabelOut,Lpos=0, x=10 ,y=2]{2}
 	\Vertex[x=10 ,y=2,Math,L=d_1]{d1}%VERTEXLABEL

     \Edges(s1,d1)

%----------------------------

%--------------------fig - right

%----suppliers bottom

	\Vertex[LabelOut,Lpos=180, x=12 ,y=2]{3}
 	\Vertex[x=12 ,y=2,Math,L=s_1]{s1}%VERTEXLABEL

 	\Vertex[LabelOut,Lpos=180, x=12 ,y=0]{1}
	\Vertex[x=12 ,y=0,Math,L=s_2]{s2}

%---------------- demanders bottom

	\Vertex[LabelOut,Lpos=0, x=16,y=2]{2}
 	\Vertex[x=16,y=2,Math,L=d_1]{d1}%VERTEXLABEL

\Edges[label={$z_{11}=1$}](s1,d1)
\Edges[label={$z_{21}=1$}](d1,s2)
%----------------------------

	\end{tikzpicture}
   \caption{Inconsistency of the egalitarian rule and envy of edge fair rule} 
   \label{fig:cons-noenvy}
  \end{center}
\end{figure}
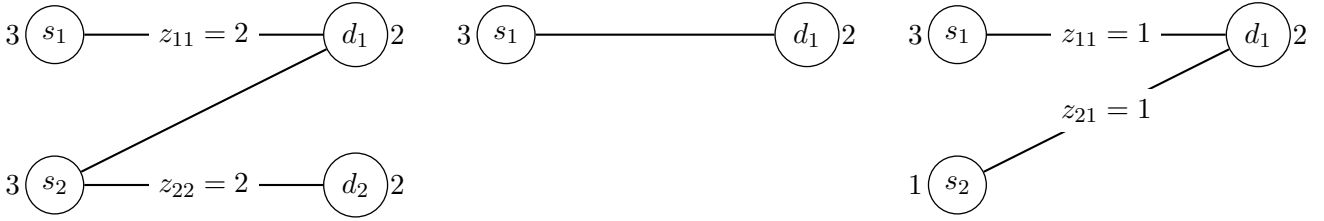

%-----------------------------------

We have already seen that the edge-fair rule is also consistent. But here is an exmple
where the edge-fair rule has envy. But one can show that no consistent rule is envy-free (under $PO^*$) using the same example.

\begin{lemma}
 There is no mechanism which is simultaneously envy free for agents on the nodes and edge consistent under $PO^{*}$
\end{lemma}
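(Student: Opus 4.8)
The plan is to exhibit a single instance in which edge-consistency together with selection from $PO^{*}$ is already incompatible with No Envy, built around the networks of Figure~\ref{fig:cons-noenvy}. Suppose, for contradiction, that $\varphi$ is a rule that always picks a flow whose net transfers lie in $PO^{*}$, is edge-consistent, and is envy-free. Apply $\varphi$ to the leftmost network $G$ of Figure~\ref{fig:cons-noenvy}: suppliers $s_1,s_2$ with peaks $3,3$, demanders $d_1,d_2$ with peaks $2,2$, edges $(s_1,d_1),(s_2,d_1),(s_2,d_2)$ of infinite capacity; write $z=\varphi(G)$ and index the edge flows $z_{11},z_{21},z_{22}$ as in the figure. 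Reading off the $PO^{*}$ structure from Lemma~\ref{lem1} and Proposition~\ref{prop:pareto}, the Gallai--Edmonds partition here is $S_{-}=\{s_1,s_2\}$, $S_{+}=\emptyset$, $D_{+}=\{d_1,d_2\}$, $D_{-}=\emptyset$, so every $PO^{*}$-flow satisfies $y_{d_1}=y_{d_2}=2$. Since $(s_2,d_2)$ is the only edge into $d_2$, this forces $z_{22}=2$ no matter which $PO^{*}$-flow $\varphi$ selects; the only remaining freedom is a single parameter $t:=z_{21}$, with $z_{11}=2-t$, and $x_{s_2}=z_{21}+z_{22}=t+2\le s_2=3$ gives $t\in[0,1]$. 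Thus $x_{s_1}=2-t$ and $x_{s_2}=2+t$ on $G$.

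Next I would delete the edge $(s_2,d_2)$. Because $z_{22}=2$ independently of $\varphi$, the reduced problem is always the rightmost network $G'$ of Figure~\ref{fig:cons-noenvy}: $s'_1=3$, $s'_2=s_2-2=1$, $d'_1=2$ (and $d'_2=0$, which we discard), with edges $(s_1,d_1)$ and $(s_2,d_1)$. Edge-consistency forces $\varphi(G')$ and $z$ to agree on these two edges, so the parameter $t$ is the same in $G'$.

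I then claim that envy-freeness on $G'$ forces $t=1$. In any $PO^{*}$-flow of $G'$ one has $x_{s_2}=z_{21}=t\le s'_2=1$ and $x_{s_1}=z_{11}=2-t$, so if $t<1$ then $x_{s_2}<1<x_{s_1}$. Pick a single-peaked preference for $s_2$ with peak $1$ under which the larger value $2-t$ is strictly preferred to the smaller value $t$ --- admissible precisely because the two values lie on opposite sides of the peak, where single-peakedness imposes no restriction. Then, with $i=s_2$ and $j=s_1$ in the No Envy axiom, the premise $x_{s_1}\,P_{s_2}\,x_{s_2}$ holds; yet shifting a small amount $\delta>0$ of flow from $(s_1,d_1)$ to $(s_2,d_1)$, with $t+\delta\le1$, yields a feasible flow that leaves each $y_j$ unchanged and moves $x_{s_2}$ strictly closer to its peak $1$, contradicting No Envy. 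Hence $t=1$, so $\varphi$ must choose $z_{11}=z_{21}=1$ on $G'$, and therefore on $G$ as well.

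Finally I would close the loop on $G$. With $t=1$, the transfers on $G$ are $x_{s_1}=1$, $x_{s_2}=3$, $y_{d_1}=y_{d_2}=2$. Now $s_1$ has peak $3$, so it strictly prefers $x_{s_2}=3$ to its own $x_{s_1}=1$; but the flow $z_{11}=2$, $z_{21}=0$, $z_{22}=2$ is feasible (indeed in $PO^{*}$), leaves every $y_j$ unchanged, and gives $s_1$ the value $2$, which it strictly prefers to $1$. So $\varphi$ violates No Envy on $G$ --- the desired contradiction. The step I expect to be the main obstacle is the reduction in the middle: making the whole argument independent of which $PO^{*}$-flow $\varphi$ happens to pick. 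This is exactly what dictates deleting the edge into the degree-one demander $d_2$, whose flow is pinned down by Gallai--Edmonds; and it is also where a small subtlety surfaces, namely that in $G'$ the candidate allocations $t$ and $2-t$ are symmetric about $s_2$'s reduced peak, so a symmetric single-peaked preference would leave $s_2$ indifferent and one genuinely needs an asymmetric one. Everything else is routine bookkeeping with the explicit $PO^{*}$ description.
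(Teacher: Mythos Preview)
Your proof is correct and uses essentially the same counterexample and edge deletion as the paper; the only difference is the order of the argument (the paper first pins down the unique envy-free flow on $G$ as $(z_{11},z_{21},z_{22})=(2,0,2)$ and then shows consistency forces $(2,0)$ on $G'$, which fails No Envy there, whereas you work from $G'$ back to $G$). Your explicit handling of the asymmetric single-peaked preference needed to break the $t$ versus $2-t$ symmetry is a point the paper glosses over, so your version is in fact slightly more careful.
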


Let us consider the same example as in figure~\ref{fig:cons-noenvy}. Suppose the mechanism is envy free:
Any envy free solution should allocate 2 units to each supplier 1 and 2. 
%(If this
%is not the case, the agent with lesser allocation will envy the other agent and there is a %possibility of redistribution to make 
%the allocation envy free).
 This establishes a unique edge flow: $(z_{11},z_{21},z_{22}) = (2,0,2)$.
Lets remove the edge $s_{2}-d_{2}$ with $z_{22} =2$ units allocation. If this mechanism was also consistent, then on this
reduced network the mechanism should have an allocation $(z_{11},z_{21}) = (2,0)$ on the edges. But the no-envy solution on this reduced graph would allocate $(z_{11},z_{21}) = (1,1)$

Now, suppose the given mechanism is edge consistent and lets say $(z_{11},z_{21},z_{22}) = (2,0,2)$ is an allocation from some edge consistent rule. Removing the edge $s_{2}-d_{2}$, in the reduced graph the allocation from an edge consistent mechanism is $(z_{11},z_{21}) = (2,0)$ but this does not allocate a envy free solution for the nodes on the reduced graph.
As a consequence, if the mechanism is edge consistent it cannot allocate $(z_{11},z_{21},z_{22}) = (2,0,2)$ in the original network but this is the only envy free solution on that network. The same example also shows that any edge consistent mechanism violates the property equal treatment of equals. \qed

These results imply that no rule can be Pareto efficient, Envy-free and Consistent.
Both the egalitarian and edge-fair rules find Pareto efficient allocations; where
they differ is that the egalitarian rule relaxes consistency but is envy-free, but
the edge-fair rule relaxes envy-freeness but is consistent.
Bochet et al.~\cite{bims} show that the egalitarian rule is strategyproof; and in
our earlier paper, we show that it is in fact group strategyproof. A natural question
is if the edge-fair rule enjoys these properties as well. We answer this question in
the affirmative in the next section.

\subsection{Strategic Issues}
We start with a formal definition of strategy-proofness and group strategy-proofness.
Informally, a mechanism is strategyproof if it is a (weakly) dominant strategy for 
each agent to reveal his peak truthfully; and a mechanism is group strategyproof 
if it is a (weakly) dominant strategy for any group of agents to reveal their
peaks truthfully.

\textbf{Strategyproof:} A rule $(x,y)$ on $(G,s,d)$ is \emph{strategyproof} if 
for all $R \in R^{S \cup D}$, $i \in S, j \in D$ and $R_{i}', R_{j}' \in \mathcal{R}$
\begin{equation}
 x_{i}(R)R_{i}x_{i}(R'_{i},R_{-i}) \hspace{2mm} 
\text{and} \hspace{2mm} y_{j}(R)R_{j}y_{j}(R_{j}',R_{-j})
\end{equation}

\textbf{Peak Group Strategyproof:} A rule $(x,y)$ on $(G,s,d)$ is 
\emph{group strategyproof} if for all
$R \in R^{S \cup D}$, $A \subseteq S \cup D$ and $R_{i}' \in \mathcal{R}$
\begin{equation}
 x_{i}(R)R_{i}x_{i}(R'_{A},R_{-A}) \hspace{2mm} \forall i \in A 
\end{equation}

These properties are closely related to an {\em invariance} property that
we formally define below:

\textbf{Invariance}: For all 
$R \in \mathcal{R}^{S \cup D}$, $i \in S$ and $R_{i}' \in \mathcal{R}$
\label{def:inv}
\begin{eqnarray}
 \{ s[R_{i}] < x_{i}(R) \hspace{2mm} \text{and} \hspace{2mm} s[R_{i}'] \leq x_{i}(R) \} \hspace{2mm} \text{or}
 \hspace{2mm} \{s[R_{i}] > x_{i}(R) \hspace{2mm} \text{and} \hspace{2mm} s[R_{i}'] \geq x_{i}(R)\} \\
 \implies x_{i}(R'_{i},R_{-i}) = x_{i}(R)
\end{eqnarray}
and a similar invariance property can be defined with respect to the demanders.

\begin{lemma}
 \label{lem:sp_inv} 
For any rule that always selects an allocation $(x,y) \in PO^{*}$, 
strategyproofness and invariance are equivalent.
\end{lemma}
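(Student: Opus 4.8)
The plan is to reduce everything to the one-dimensional structure of a peak-only rule. Fix a supplier $i$ and the reports $R_{-i}$ of the remaining agents; since the rule is peak-only, the allocation to $i$ depends on $i$'s report only through its peak, so write $g(p)$ for the amount supplier $i$ receives when she reports peak $p$. Because the rule always selects an allocation in $PO^{*}$, we have $g(p)\le p$ for every $p$. The first thing I would note is that this makes the disjunct $s[R_i]<x_i(R)$ in the hypothesis of invariance vacuous, so for such a rule invariance is equivalent to the single implication
\begin{equation}
s[R_i]>x_i(R)\ \text{and}\ s[R_i']\ge x_i(R)\ \Longrightarrow\ x_i(R_i',R_{-i})=x_i(R).\label{eq:plan-redinv}
\end{equation}
I would also record two elementary consequences of single-peakedness: (i) if a point $a$ lies weakly between the peak $v$ and a point $w\ne a$, then $a\,P\,w$ under \emph{every} single-peaked preference with peak $v$; and (ii) if $a$ and $w$ lie strictly on opposite sides of $v$, then there \emph{exists} a single-peaked preference with peak $v$ under which $w\,P\,a$ (a tent-shaped preference that is sufficiently flat on the side containing $w$). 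All arguments are for a supplier; the demander case is symmetric.

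For invariance $\Rightarrow$ strategyproofness, fix $R$ and a deviation $R_i'$ and set $v=s[R_i]$, $a=x_i(R)=g(v)$, $b=x_i(R_i',R_{-i})=g(s[R_i'])$. If $a=v$ then $a$ is $i$'s peak and truth is trivially optimal. Otherwise $a<v$: if $s[R_i']\ge a$ then \eqref{eq:plan-redinv} gives $b=a$, so there is nothing to gain; if $s[R_i']<a$ then $b=g(s[R_i'])\le s[R_i']<a\le v$, so $a$ lies strictly between $b$ and the peak $v$ and fact (i) gives $a\,P_i\,b$. Hence truthtelling weakly dominates.

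For strategyproofness $\Rightarrow$ invariance I would prove \eqref{eq:plan-redinv} by squeezing $b:=g(w)$ from both sides, where $v=s[R_i]>a=g(v)$ and $w=s[R_i']\ge a$. To get $b\le a$: if $b>a$, then since $a<v$, fact (i) (when $b\le v$) or fact (ii) (when $b>v$) yields a single-peaked $\hat R_i$ with peak $v$ under which $b\,\hat P_i\,a$; at the profile $(\hat R_i,R_{-i})$, reporting truthfully gives $g(v)=a$ while reporting a preference with peak $w$ gives $g(w)=b$, contradicting strategyproofness. To get $b\ge a$: if $b<a$, then $b<a\le w$, so fact (i) gives $a\,P_i'\,b$ under the given $R_i'$ (whose peak is $w$); at $(R_i',R_{-i})$, reporting truthfully gives $g(w)=b$ while reporting a preference with peak $v$ gives $g(v)=a$, again contradicting strategyproofness. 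Hence $b=a$, which is \eqref{eq:plan-redinv}.

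The step I expect to require the most care is the appeal to fact (ii): one must check that the domain $\mathcal{R}$ of \emph{all} single-peaked preferences — not merely the symmetric ones — is rich enough to contain, for any $v$ and any prescribed points $a,w$ strictly on opposite sides of $v$, a preference with peak $v$ that ranks $w$ above $a$. A one-line explicit construction (two linear pieces with appropriately chosen slopes) settles this, but it is the only place where the full breadth of the preference domain is used; the remainder is routine manipulation of the inequalities $g(p)\le p$ and the definition of single-peakedness.
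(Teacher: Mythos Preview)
Your proposal is correct and follows essentially the same line as the paper's proof: both use the $PO^{*}$ bound $x_i\le s_i$ to collapse invariance to the single clause \eqref{eq:plan-redinv}, and both exploit own-peak-onlyness to compare the truthful allocation at $R$ with the truthful allocation at a specially constructed $\hat R_i$ sharing the same peak. The paper is terser---it simply asserts ``we can construct a profile $R^{*}$ such that $x_i'P_{i^{*}}x_i$'' and cites Bochet et al.\ for own-peak-onlyness---whereas you make the construction explicit via your fact~(ii) and flag the domain-richness requirement; this is a genuine clarification rather than a different method.
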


\textbf{Proof:} First we show that, under $PO^*$, strategyproofness implies
invariance: As the allocation is in $PO^{*}$ we have $x_{i} \leq s_{i}$.
Thus, to prove invariance we need to show that when 
$x_{i} < s_{i}$, and $s_{i}' \geq x_{i}$ we have $x'_{i} = x_{i}$. 
Suppose not and we have $x'_{i} < x_{i}$. Then agent $i$ benefits 
by misreporting his peak as $s_{i}$ when his true peak is $s_{i}'$,
which violates strategyproofness. Similarly, if $x_{i}' > x_{i}$, we can 
construct a profile $R^{*}$ such that $x_{i}'P_{i^{*}}x_{i}$. As a 
$PO^{*}$ + Strategyproof rule is peak-monotonic and as a consequence
own peak only (Bochet et al.~\cite{bims}), $x_{i}(R_{i}^{*},R_{-i}) = x_{i}(R)$. 
Hence, $i$ benefits by sreporting $s_{i}'$ when his true 
peak is $s_{i}$, which violates strategyproofness again.

We now show the converse. Suppose the rule is not strategyproof.
Under a $PO^{*}$ rule, $x_{i} = s_{i}$ for every agent
$i \in S_{+}$, hence those agents never misreport. Every agent in $i \in S_{-}$ is such that $x_{i} \leq s_{i}$. So, any agent who deviates
and improves his allocation is such that $s'_{i} \geq x_{i} < s_{i}$ and $x_{i}' P_{i} x_{i}$. But this is not possible under an invariant rule.
Hence, the rule is indeed strategyproof. \qed

As we discussed earlier, the egalitarian rule is strategyproof, but is also group
strategyproof. A natural question is if every strategyproof rule in our problem is
also group strategyproof\footnote{Barbera et al.~\cite{barberagsp} study environments where this is
indeed the case.}. As it turns out, the answer is "no" as shown by the following example.
Consider the following mechanism, if the report of $d_{0} >=5$, then apply the 
egalitarian mechanism and if the report of $d_{0} < 5$, follow the edge fair mechanism. 
This rule is clearly strategyproof. But agent $d_{0}$ and $s_{1}$ can collude such that agent $d_{0}$ misreports his peak as 4 (when his/her true peak is 6). This improves the allocation
of agent $s_{1}$ by 1 unit, keeping the allocation of $d_{0}$ to be the same.

%-----Invariance and GSP are not equivalent

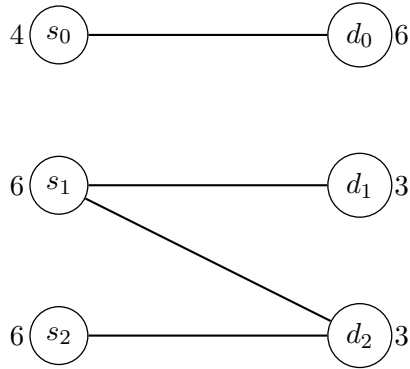
\begin{figure}[h!]\label{AA}
 \begin{center}
  \begin{tikzpicture}
   \GraphInit[vstyle=Normal]

%--------------------fig - left

%----suppliers bottom

	\Vertex[LabelOut,Lpos=180, x=0 ,y=0]{6}
 	\Vertex[x=0 ,y=0,Math,L=s_2]{s2}%VERTEXLABEL

 	\Vertex[LabelOut,Lpos=180, x=0 ,y=2]{6}
	\Vertex[x=0 ,y=2,Math,L=s_1]{s1}

	\Vertex[LabelOut,Lpos=180, x=0 ,y=4]{4}
	\Vertex[x=0 ,y=4,Math,L=s_0]{s0}

%---------------- demanders bottom

	\Vertex[LabelOut,Lpos=0, x=4,y=0]{3}
 	\Vertex[x=4 ,y=0,Math,L=d_2]{d2}%VERTEXLABEL

	\Vertex[LabelOut,Lpos=0, x=4 ,y=2]{3}
 	\Vertex[x=4 ,y=2,Math,L=d_1]{d1}%VERTEXLABEL

	\Vertex[LabelOut,Lpos=0, x=4,y=4]{6}
 	\Vertex[x=4 ,y=4,Math,L=d_0]{d0}%VERTEXLABEL

     \Edges(s0,d0)
     \Edges(s1,d1)
     \Edges(s1,d2)
     \Edges(s2,d2)

	\end{tikzpicture}
   \caption{Invariance and GSP are not equivalent} 
  \end{center}
\end{figure}

%-----------------------------------

We know from Bochet et al.~\cite{bims} that strategyproofness of a 
rule can just be characterized by peak monotonicity and invariance. 
From the above discussion, strategyproofness is characterized by $PO^{*}$ 
and invariance. So, the natural question is what other additional property 
is needed to make a mechanism groupstrategyproof. Next, we show that 
any groupstrategyproof mechanism can be characterized by $PO^{*}$ and
the following {\em stronger} invariance property:

\textbf{Strong Invariance}: For all $R \in \mathcal{R}^{S \cup D}$, $i \in S$ and $R_{i}' \in \mathcal{R}$
\begin{eqnarray}
 \{ s[R_{i}] < x_{i}(R) \hspace{2mm} \text{and} \hspace{2mm} s[R_{i}'] \leq x_{i}(R) \} \hspace{2mm} \text{or}
 \hspace{2mm} \{s[R_{i}] > x_{i}(R) \hspace{2mm} \text{and} \hspace{2mm} s[R_{i}'] \geq x_{i}(R)\} \\
 \implies x_{j}(R'_{i},R_{-i}) = x_{j}(R) \hspace{2mm} \forall j \in S \ \text{and} \\
  y_{l}(R'_{i},R_{-i}) = y_{l}(R) \hspace{2mm} \forall l \in D
\end{eqnarray}
and a similar strong invariance property can be defined with respect to the demanders. 

In other words, while
invariance implies that the allocation of a supplier is unchanged whenever 
his peak misreport is above his allocation, strong invariance implies that 
the allocation of {\em every} agent is unchanged when a particular agent misreports his
peak over his current allocation. 

% \textbf{Non-Bossy}:  A mechanism is said to be non-bossy if for all $R \in \mathcal{R}^{S \cup D}$, $i \in S$ and $R_{i}' \in \mathcal{R}$ if
% \begin{eqnarray}
%  x_{i}(R) = x_{i}(R_{i}',R_{-i}) \hspace{2mm} \text{then} \hspace{2mm} x_{j}(R) = x_{j}(R_{i}',R_{-i}) \hspace{2mm} \forall j \in S \\
%  y_{k}(R) = y_{k}(R_{i}',R_{-i}) \hspace{2mm} \forall k \in D
% \end{eqnarray}
% and a similar property can be defined for the misreport of a demander agent. In simple terms, in a non-bossy mechanism, no agent 
% can alter the allocation of others without changing his allocation.

Our main result in this section is the following.

\begin{theorem}
\label{thm:gsp_peak}
Any mechanism that always selects an allocation from $PO^*$ satisfies strong 
invariance if and only if it is group strategy-proof.
\end{theorem}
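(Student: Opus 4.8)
The plan is to prove the two implications separately, using only that the rule selects from $PO^*$ together with Lemma~\ref{lem:sp_inv}; the forward implication is essentially an axiomatic recasting of the group strategy-proofness argument from our earlier work. For ``group strategy-proof $\Rightarrow$ strong invariance'', I would first note that group strategy-proofness restricted to singleton coalitions is strategy-proofness, so by Lemma~\ref{lem:sp_inv} the rule is invariant; thus $x_i(R_i',R_{-i}) = x_i(R)$ whenever the hypothesis of strong invariance holds, and it remains only to upgrade this to ``\emph{every} agent's allocation is unchanged.'' Suppose not: for some profile $R$, supplier $i$ and report $R_i'$ satisfying the hypothesis, the allocation of some other agent moves; say it is a supplier $j$, and write $w = x(R)$, $z = x(R_i',R_{-i})$, so $z_i = w_i$ and $z_j \neq w_j$. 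After exchanging the roles of the two profiles $R$ and $(R_i',R_{-i})$ if necessary I may assume $z_j > w_j$. The observation I would exploit is that $j$ reports truthfully in $(R_i',R_{-i})$, so $PO^*$ in that profile forces $z_j \le s_j$; hence $w_j < z_j \le s_j$ and $j$ strictly prefers $z_j$ to $w_j$ under its true preference. Then, with $R$ as the true profile, the coalition $\{i,j\}$ in which $i$ reports $R_i'$ and $j$ reports truthfully obtains the outcome $z$: agent $i$ is indifferent, agent $j$ is strictly better off, contradicting group strategy-proofness. The cases of a demander $j$ or a demander $i$ are symmetric.

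For the converse I would argue by contradiction: fix a true profile $R$, a coalition $A$, and a deviation $R_A'$ under which every member of $A$ is weakly better off and some member $k^*$ is strictly better off; put $w = x(R)$ and let $z$ be the deviated outcome, so $z_{k^*}\neq w_{k^*}$. The aim is to pass from $R$ to the deviated profile through a chain of \emph{single-agent} report changes, each satisfying the hypothesis of strong invariance, so that the allocation never moves; this forces $z = w$, a contradiction (it also disposes of $|A|=1$, since strong invariance implies invariance). Assembling the chain has two parts. First I would remove any ``stuck'' member $k \in A$ --- one that is indifferent between $w_k$ and its deviated allocation while already receiving its true peak under $R$, so $z_k = w_k$ equals $k$'s peak: for such $k$, $PO^*$ in the deviated profile gives its reported peak $\ge z_k$, so strong invariance applied \emph{at the deviated profile} lets $k$ re-report truthfully without changing anything; after doing this for all stuck members I may assume $A$ has none, and $k^*$ (a strict beneficiary) is still present. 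Second, for every remaining member $k$ I would check --- using only that $PO^*$ in the deviated profile forces its reported peak to be at least the deviated allocation $z_k$, and that weak benefit forces $z_k$ to lie between $w_k$ and $k$'s true peak --- that $k$ is strictly rationed under $R$ and that its reported peak lies weakly on the far side of $w_k = x_k(R)$. Deviating these members out of $R$ one at a time, a short induction then shows the current profile always has allocation $w$, so the next deviating agent is strictly rationed in it and its reported peak lies weakly on the far side of its current allocation --- exactly the hypothesis of strong invariance --- hence the allocation stays fixed; after all members have deviated it is still $w$, contradicting $z_{k^*}\neq w_{k^*}$.

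The hard part will be this converse, and within it the genuinely delicate issue is ruling out an ``overshoot'': a priori a coalition member might be made strictly better off by receiving \emph{more} than its true peak, which would place its reported peak and its true peak on opposite sides of its deviated allocation and break the induction if strong invariance had to be invoked at the deviated profile for it. The resolution I would rely on is that for such members strong invariance is invoked only for the \emph{forward} deviations out of $R$, where the relevant comparison is with $x_k(R) = w_k$ rather than with the deviated allocation, and there $PO^*$ in the deviated profile still forces the reported peak to be at least $z_k \ge w_k$; the only members for which one must reason at the deviated profile are those already sitting at their peak, for whom there is nothing to overshoot.
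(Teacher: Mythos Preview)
Your argument is correct in both directions. For ``group strategy-proof $\Rightarrow$ strong invariance'' you follow essentially the same line as the paper: invariance fixes the deviating agent's allocation, and if any other agent's allocation moves then a two-agent coalition violates GSP. (The paper produces the strictly-benefiting partner via conservation of total flow in $S_-$; your swap trick is a clean alternative that avoids even that appeal to structure.)

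For ``strong invariance $\Rightarrow$ group strategy-proof'' your route is genuinely different. The paper argues structurally: it fixes a counterexample on a graph with the fewest nodes, compares the Gallai--Edmonds decompositions under the true and deviated reports, and through a long sequence of flow-counting inequalities shows that the sets $D_+\cap D'_-$ and $S_+\cap S'_-$ can be stripped away without affecting either problem, forcing the decomposition to be unchanged; only then does it apply strong invariance agent-by-agent. Your argument bypasses the decomposition entirely and walks directly from $R$ to the deviated profile one report at a time, using $PO^*$ at the two endpoint profiles to verify the hypothesis of strong invariance at each step, after first peeling off the ``stuck'' members at the deviated end. This is considerably shorter and reveals that the theorem is purely axiomatic: nothing about bipartite flows or the GE decomposition is needed, only that $PO^*$ caps every agent at its peak. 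The paper's structural detour does buy a detailed picture of how a hypothetical manipulation would interact with the network, but for the theorem as stated that information is not required, and indeed the paper's own endgame (the $A_s$/$A_x$ split) is exactly your chain argument applied after the decomposition has been pinned down.

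One small wording point: you write that ``weak benefit forces $z_k$ to lie between $w_k$ and $k$'s true peak,'' but the overshoot case you yourself flag shows $z_k>s_k$ is not excluded. What you actually need and use is only $s'_k\ge z_k\ge w_k$, and your last paragraph correctly explains why this suffices for the forward chain.
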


\textbf{Proof of theorem~\ref{thm:gsp_peak}} 
We follow the proof technique introduced in Chandramouli $\&$ Sethuraman~\cite{cs} for the first part of the theorem 
$PO^{*}$, strong invariance $\implies$ Peak GSP.

% \underline{Part (i) GSP w.r.t to joint misreport of peaks} \\
Suppose such a rule is not peak group strategyprof then lets focus on a network $G$ with the
{\em smallest} number of nodes. Suppose the true peaks
of the suppliers and demanders are $s$ and $d$ respectively,
and suppose their respective misreports are $s'$ and $d'$.
We can assume that $d_j > 0$ for every 
demander $j$, as otherwise deleting $j$ would result in a smaller counterexample.
Fix a coalition $A$ of suppliers and a coalition $B$ of
demanders : note that $A$ contains
all the suppliers $k$ with $s'_k \not = s_k$, and $B$
includes all demanders $\ell$ with $d'_\ell \not = d_\ell$.

Let $(x,y)$ and $(x',y')$ be the respective allocations
to the suppliers and demanders when they report $(s,d)$ and $(s',d')$ 
respectively. 
Let $S_{+}, S_{-}, D_{+}, D_{-}$ be the decompsition 
when the agents report $(s,d)$, and let $S'_{+}, S'_{-},
D'_{+}, D'_{-}$ be the decomposition when the agents 
report $(s',d')$.
We shall show that when
the agents report $(s',d')$ rather than $(s,d)$,
the only allocation in which each agent in $A \cup B$
is (weakly) better off, then $x'_k = x_k$ for 
all $k \in A$ and $y'_{\ell} = y_{\ell}$ for all $\ell \in B$.
This establishes the required contradiction.

Let $Y' := D_{+} \cap D'_{-}$. If $Y' = \{\emptyset\}$, then consider the set of suppliers $S_{-} \cap S_{+}'$.
Every supplier $i \in S_{-} \cap S_{+}'$ do not send flow to any demander $j$ in $D_{+}'$. Hence, these suppliers
can send flow to only demanders in $f(S_{-} \cap S_{+}') \cap D_{-}'$. Now observe, $z_{ij} = u_{ij}, z_{ij}' \leq u_{ij}$ when 
the reports are $s$ and $s'$ respectively for every agent $i \in S_{-} \cap S_{+}' , j \in f(S_{-} \cap S_{+}') \cap D_{-}'$. 
Hence, every supplier $i \in S_{-} \cap S_{+}'$ sends weakly less flow to every agent
connected to him. Hence, $s_{i}' = x_{i}' \leq x_{i} \leq s_{i}$. So, we can conclude $A = \{\emptyset\}$ when $Y' = \{\emptyset\}$.\\

We now consider the case $Y' \neq \{\emptyset\}$ and make observations about the suppliers $X':= g(Y') \cap S_{-} \cap S'_{+}$. 
Let $Y'':= f(X') \cap D_{-}' \cap D_{-}$

\begin{itemize}
\item
For any such supplier $k$, $s'_k = x'_k$ and $x_k \leq s_k$. Also,
$d_\ell = y_\ell$ and $y'_\ell \leq d'_\ell$ for any $\ell \in Y'$.
\item
When the report is $s'$, 
every such supplier can send flow only to the demanders 
in $Y' \cup Y''$: this is because no link exists between agents in $X'$ and demanders in $D_{-}' \backslash \{Y' \cup Y''\}$ and 
$z_{ij} = 0$ $\forall$ $ij \in G(S_{+}',D_{+}')$ in a pareto optimal allocation. Also, observe that $z_{ij} \leq u_{ij}$ $\forall$
$ij \in G(X',Y'')$ and $z_{ij} = u_{ij}$ $\forall$ $ij \in G(S_{-}',Y')$.
Therefore
$\sum_{k \in X'} x'_k \; \leq \; \sum_{\ell \in Y'} y'_\ell - \sum_{ij \in G(S_{-}',Y')}{u_{ij}} + \sum_{ij \in G(X',Y'')}{u_{ij}} $
\item
When the report is $s$, $z_{ij} = u_{ij}$ $\forall$ $ij \in G(X',Y'')$. The agents in $Y'$ can receive flow only from agents in $X'$ and $g(Y') \cap S_{-}' \cap S_{-}$.
The agents in $Y'$ can receive at most $\sum_{ij \in G(S_{-}',Y')}{u_{ij}}$ units of flow from the suppliers $g(Y') \cap S_{-}' \cap S_{-}$. Hence, the remaining
allocation has to be supplied from $X'$. Also, note that $f(X') \supseteq Y'$.
Therefore
$\sum_{k \in X'} x_k \geq \sum_{\ell \in Y'} y_\ell - \sum_{ij \in G(S_{-}',Y')}{u_{ij}} + \sum_{ij \in G(X',Y'')}{u_{ij}}$.
\end{itemize}
Let $ f(S_{-}',Y'): = - \sum_{ij \in G(S_{-}',Y')}{u_{ij}} + \sum_{ij \in G(X',Y'')}{u_{ij}}$.
Finally, note that $s'_k = s_k$ for all $k \not \in A$, and
$d'_\ell = d_\ell$ for all $\ell \not \in B$.
These observations first lead to
\begin{equation}
\label{eq:2glbg}
\sum_{\substack{ k \in X'  \\ k \not \in A} } s_k + 
\sum_{\substack{k \in X'  \\ k \in A}} x'_k  \; = \;
   \sum_{\substack{ k \in X' \\ k \not \in A} } s'_k +
\sum_{\substack{k \in X'  \\ k \in A}} x'_k  \; = \;
    \sum_{k \in X' } x'_k \; \leq \; \sum_{\ell \in Y' } y'_\ell + f(S_{-}',Y')
\end{equation}
Note that every demander $\ell$ in $Y' \cap B$ receives {\em exactly} his peak
allocation $d_\ell$ for a truthful report, so for the coalition $B$ of 
demanders to do weakly better in the $(G,s',d')$ problem, 
$y'_\ell = d_\ell$ for each such $\ell$.
Therefore,
\begin{equation}
\label{eq:2gs}
\sum_{\ell \in Y'} y'_\ell \; = \; 
     \sum_{\ell \in Y' \setminus B} y'_\ell + \sum_{\ell \in Y' \cap B} y'_\ell \; \leq \; 
     \sum_{\ell \in Y' \setminus B} d'_\ell + \sum_{\ell \in Y' \cap B} d_\ell \; = \; 
     \sum_{\ell \in Y'} d_\ell.
\end{equation}
Finally,
\begin{equation}
\label{eq:2gubg}
\sum_{\ell \in Y'} d_\ell + f(S_{-}',Y')  \; = \; 
\sum_{\ell \in Y'} y_{\ell} \ + f(S_{-}',Y')
\; \leq \;
\sum_{k \in X' } x_k \; \leq \; 
\sum_{\substack{ k \in X' \\ k \not \in A} } s_k + 
\sum_{\substack{k \in X' \\ k \in A}} x_k
\end{equation}

\bigskip

For every supplier in $A$ to be (weakly) better off when 
reporting $s'$, we must have
$x'_k \geq x_k$ for each $k \in X'$. Combining this with
inequalities (\ref{eq:2glbg}) and (\ref{eq:2gubg}), we conclude that 
all the inequalities in (\ref{eq:2glbg})-(\ref{eq:2gubg}) hold as
equations.
In particular, $x'_k = x_k$ for all $k \in X' $, and
$y'_{\ell} = y_{\ell}$ for $\ell \in Y'$.
Therefore, whether the report is $s$ or is $s'$, the suppliers in 
$X'$ send all of their
flow only to the demanders in $Y'$ and $Y''$; Moreover, the edges from $X'$ to $Y''$ and 
$S_{-}'$ to $Y'$ are saturated and
that the demanders in $Y'$
receive all of their flow only from the suppliers in $X'$ and from the saturated edges from 
$S_{-}'$
Therefore, removing the suppliers in $X'$ and the 
demanders in $Y'$ and the saturated edges from $X'$ to $Y''$ and 
$S_{-}'$ to $Y'$
does not affect the allocation rule for either problem. As we picked a
smallest counterexample, $Y'$ must be empty.

We now turn to the other case.
Let $\tilde{X} := S_{+} \cap S'_{-}$.
Define  $\tilde{\tilde{{Y}}}:= f(\tilde{X}) \cap D_{-} \cap D_{-}'$ and 
Consider the demanders in $\tilde{Y}:= f(\tilde{X}) \cap D_{-} \cap D_{+}'$ 

\begin{itemize}
\item
For any such demander $\ell$, $d'_\ell = y'_\ell$ and $y_\ell \leq d_\ell$. 
Also, $s_k = x_k$ and $x'_k \leq s'_k$ for any $k \in \tilde{X}$.
\item
When the report is $s'$, every such demander can receive flow from
the suppliers in $\tilde{X}$ and suppliers in $g(\tilde{Y}) \cap S_{-} \cap S_{-}'$.
The supplier $i \in \tilde{X}$ send flow $z_{ij} = u_{ij}$ to every demander $j \in \tilde{Y}$ in the graph $G(\tilde{X},\tilde{\tilde{Y}})$.
Suppliers in $S_{-}$ send at most $\sum_{ij \in G(S_{-},\tilde{Y})} u_{ij}$ units of flow to $\tilde{Y}$. But note that $f(\tilde{X}) \supseteq \tilde{Y}$
and hence $\tilde{X}$ can send flow to agents in $D_{+}' \backslash \tilde{Y}$.
Therefore
$\sum_{k \in \tilde{X}} x'_k \; \geq \; 
	\sum_{\ell \in \tilde{Y}} y'_\ell -\sum_{ij \in G(S_{-},\tilde{Y})} u_{ij} + \sum_{ij \in G(\tilde{X},\tilde{\tilde{Y}})} u_{ij}$.
\item
When the report is $s$, the suppliers in $\tilde{X}$ send flow
only to the demanders in $D_{-}$, and they can send flow only to the
demanders they are connected to. so the suppliers in $\tilde{X}$ can
send flow only to the demanders in $\tilde{Y} \cup \tilde{\tilde{Y}}$. The agents in $\tilde{X}$
can send at most $\sum_{ij \in G(\tilde{X},\tilde{\tilde{Y}})} u_{ij}$ units of flow to the agents
in $\tilde{\tilde{Y}}$. Also, the agents in $\tilde{Y}$ receive flow $\sum_{ij \in G(S_{-},\tilde{Y})} u_{ij}$  from $S_{-}$
Therefore
$\sum_{k \in \tilde{X}} x_k \; \leq \; 
	\sum_{\ell \in \tilde{Y}} y_\ell -\sum_{ij \in G(S_{-},\tilde{Y})} u_{ij} + \sum_{ij \in G(\tilde{X},\tilde{\tilde{Y}})} u_{ij}$.
\end{itemize}
Lets denote $\tilde{f}(S_{-},\tilde{Y}):= -\sum_{ij \in G(S_{-},\tilde{Y})} u_{ij} + \sum_{ij \in G(\tilde{X},\tilde{\tilde{Y}})} u_{ij} $.

Finally, note that $s'_k = s_k$ for all $k \not \in A$, and
$d'_\ell = d_\ell$ for all $\ell \not \in B$.
Putting all this together, we have:
\begin{equation}
\label{eq:2glbbg}
\sum_{\substack{ \ell \in \tilde{Y} 
\\ \ell \not \in B} } d_\ell + 
\sum_{\substack{ \ell \in \tilde{Y} 
\\ \ell \in B} } d'_\ell + \tilde{f}(S_{-},\tilde{Y})
 \; = \;
\sum_{\ell \in \tilde{Y}} d'_\ell + \tilde{f}(S_{-},\tilde{Y}) \; = \;
\sum_{\ell \in \tilde{Y}} y'_\ell + \tilde{f}(S_{-},\tilde{Y})
\end{equation}
and
\begin{equation}
\label{eq:2gaux}
\sum_{\ell \in \tilde{Y}} y'_\ell + \tilde{f}(S_{-},\tilde{Y})\; \leq \;
\sum_{k \in \tilde{X}} x'_k  \; \leq \;
\sum_{k \in \tilde{X} \setminus A} s'_k + 
	\sum_{k \in \tilde{X} \cap A} x'_k \; = \; 
\sum_{k \in \tilde{X} \setminus A} s_k + 
\sum_{k \in \tilde{X} \cap A} x'_k.
\end{equation}
Note that every supplier $k$ in $\tilde{X} \cap A$ receives {
\em exactly} his peak
allocation $s_k$ for a truthful report, so for the coalition $A$ of 
suppliers to do weakly better in the $(G,s',d')$ problem, 
$x'_k = s_k$ for each such $k$.
Thus,
\begin{equation}
\label{eq:2gubbg}
\sum_{k \in \tilde{X} \setminus A} s_k + 
\sum_{k \in \tilde{X} \cap A} x'_k \; = \;
\sum_{k \in \tilde{X}} s_k  \; = \;
\sum_{k \in \tilde{X}} x_k \; \leq \;
\sum_{\ell \in \tilde{Y} } y_\ell + \tilde{f}(S_{-},\tilde{Y}) \; \leq \;
\sum_{\substack{ \ell \in \tilde{Y} 
\\ \ell \not \in B} } d_\ell + 
\sum_{\substack{ \ell \in \tilde{Y} 
\\ \ell \in B} } y_\ell +\tilde{f}(S_{-},\tilde{Y})
\end{equation}

\bigskip
For every demander in $B$ to be (weakly) better off, we must have
$y'_{\ell} \geq y_{\ell}$ for each $\ell \in \tilde{Y}$.
Combining this with
inequalities (\ref{eq:2glbbg})-(\ref{eq:2gubbg}), we conclude that 
all the inequalities in (\ref{eq:2glbbg})-(\ref{eq:2gubbg}) hold as
equations.
In particular, $x'_k = x_k$ for all $k \in \tilde{X}$, and
$y'_{\ell} = y_{\ell}$ for $\ell \in \tilde{Y}$.
Therefore, whether the report is $s$ or is $s'$, the suppliers in 
$\tilde{X}$ send all of their
flow only to the demanders in $\tilde{Y}$ and to the demanders in $\tilde{\tilde{Y}}$; Moreover,
the edges from $\tilde{X}$ to $\tilde{\tilde{Y}}$ are saturated in both problems; So are the 
edges $S_{-}$ to $\tilde{Y}$.
and that the demanders in $\tilde{Y}$
receive all of their flow only from the suppliers in $\tilde{X}$ and through the 
saturated edges from $S_{-}$ in both the problems.
Therefore, removing the suppliers in $\tilde{X}$ and the 
demanders in $\tilde{Y}$ and the saturated edges from $\tilde{X}$ to $\tilde{\tilde{Y}}$ and
$S_{-}$ to $\tilde{Y}$, we 
do not affect the allocation rule for either problem. As 
we picked a smallest counterexample, $\tilde{X}$ must be empty.

We now establish that the decomposition does not change in a smallest
counterexample. We already know that $Y' = \emptyset$, which implies
$D'_{-} \subseteq D_{-}$. Suppose this containment is strict so that
there is a demander $j \in D_{-} \setminus D'_{-}$. The links from $S_{-}$ to $j$ 
are completely saturated. As $\tilde{X} = \emptyset$, $j$ receives flow only from the 
suppliers in $S_{-} \cap S_{-}'$. Also, the flow on the edges from a supplier $ i \in S_{-} \cap S_{-}'$ 
to $j$ is such that $z'_{ij} \leq u_{ij} = z_{ij}$. Hence, the allocation for agent $j$ is such that, 
$y'_{j} = d'_{j} \leq y_{j}$. But now note that, if $j \in B$ then, $d'_{j} \geq y_{j}$ or if $j \notin B$
then $y'_{j} = d_{j} \leq y_{j} \leq d_{j}$. In both the cases, we have the equality $y'_{j} = d'_{j} = y_{j}$.
This implies, $g(j) \cap S_{-} \cap S_{+}' = \{\emptyset\}$; The links from $S_{-}$ to $j$ is saturated in both 
the problems (Follows from the fact that the given rule allocates the pareto value to the agents in both the 
networks, in particular $y_{k}' = d_{k}'$ when the reports are $d'$).
Hence, we can remove those saturated edges and adjust the peaks of suppliers and demanders.
The adjusted demand of agent $j$ now is $d'_{j} = 0$. w.l.o.g we can skip the case $d'_{j} = 0$ as we can delete such a $j$
 to obtain the new decomposition or just place it in $D_{-}$. Therefore $D'_{-}
= D_{-}$, which implies $D'_{+} = D_{+}$, $S'_{+} = S_{+}$, and $S'_{-} =
S_{-}$. 

To complete the argument, let $A$ be as defined earlier.
Let $A_{+} = A \cap S_{+}$ and $A_{-} = A \cap S_{-}$, $B_{+} = A \cap D_{+}$
and $B_{-} = A \cap D_{-}$. Now, for any $j \in B_{+}$, $d'_{j} \neq d_{j}$
implies $y'_{j} = d'_{j} \neq d_{j}$ causing $j$ to do worse by reporting
$d'_{j}$. Hence, it follows, $\forall j \in B_{+}$, $d'_{j} = d_{j}$. By a
similar argument, we could establish $s'_{j} = s_{j} \forall j \in A_{+}$.
 
For any $i \in A_{-}$, $s'_i < x_i$ implies $x'_i \leq s'_i < x_i$, causing
$i$ to do worse by reporting $s'_i$.
Likewise, any $i \in B_{-}$, $d'_i < y_i$ implies $y'_i \leq d'_i < y_i$,
causing $i$ to do worse by reporting $d'_i$. So any improving coalition $A$ must
be such that $s'_i \geq x_i$ for all $i \in A_{-}$ and
$d'_i \geq y_i$ for all $i \in B_{-}$.

Now, we use the strong invariance property of the rule to conclude the result. 
Partition the agents in $A_{-} = A_{s} \cup A_{x}$ where $A_{s}:= \{x_{i} = s_{i} | i \in A_{-} \}$ and $A_{x}:=\{x_{i} < s_{i} | i \in A_{-} \}$.
Lets start with an agent $i \in A_{s}$, such an agent reports $s'_{i} > x_{i} = s_{i}$ and receives $x'_{i} = s_{i}$. Now, consider the alternate
set of reports such that $s''_{j} = s_{j}'$ for all agents $j \neq i$ and $s''_{i} = s_{i}$ and denote the corresponding network by $G(S'',D'')$.
Strong invariance property implies that when the peak report $s_{ii}' \geq x'_{i}=s_{i}$ then the allocation profile of the agents remains the same 
in the networks $G(S',D')$ and $G(S'',D'')$. Hence, we can find a \emph{smaller} counterexample by removing $i$ from $A_{-}$. Hence, we can remove
all the agents from $A_{s}$ and still find a smaller counterexample. Hence, we can assume the smallest counterexample $A_{s} = \{\emptyset\}$.

On similar lines, strong invariance property also implies that when an agent $i$ with $x_{i} < s_{i}$ misreports such that $s_{i}' > x_{i}$ then 
$x'_{i} = x_{i}$ $\forall$ $i \in S$. Hence, applying this argument for each agent iteratively, we can conclude that when the set of agents 
in $A_{x}$ inflate their peaks, the allocation does not change i.e. $x'_{i} = x_{i}$ $\forall$ $i \in S$. Hence, no agent improves his allocation 
under this rule, concluding the result. 
% strong invariance property that even if a group of agents $A_{-}$ misreport such that $s_{i}' \geq x_{i}$
% for every agent $i \in A_{-}$ then the allocation does not change i.e. $x_{i}' = x_{i}$ $\forall$ $i \in A_{-}$.
% Hence, under the given rule no agent is able to improve his allocation, hence peak group strategyproof.
\qed

Now, we turn to prove the other direction of the result i.e. any rule that is $PO^{*}$ and peak GSP is strongly invariant. We discuss the 
result only for the suppliers, by symmetry a similar reasoning follows for the demanders. Suppose such a rule is not strongly invariant. 
Since agents in $S_{+}$ receive their peak, strong
invariance property needs to be discussed only in the context of the agents in $S_{-}$ where $x_{i} \leq s_{i}$. Now, consider an agent 
$i \in S_{-}$ such that $x_{i} < s_{i}$. Consider a report by agent $i$ such that $s'_{i} \geq x_{i}$. From Lemma~\ref{lem:sp_inv} it follows that
$PO^{*}$ + strategyproof implies invariance. Hence, $x'_{i} = s_{i}$. Furthermore, it follows from the earlier discussion
that the decomposition and maximum flow does not change in this new problem. Hence, $\sum_{k \in S_{-}} x_{k} = \sum_{k \in S_{-}} x'_{k}$. Suppose
$x'_{k} = x_{k}$ $\forall$ $k \in S_{-}$ then we are done. Suppose, $x'_{k} \neq x_{k}$ for some agent $k \in S_{-}$, then there exists at least one
agent $j$ such that $s_{j} \geq x'_{j} > x_{j}$ (agent $j$ improves the allocation). Thus, the pair of agents $i$ and $j$ represent a colluding group
who can deviate and (weakly) improve the allocation which contradicts the peak GSP property of the rule. \qed

\begin{coro}
\label{cor1}
The edge-fair rule is group strategyproof.
\end{coro}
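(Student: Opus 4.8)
The plan is to derive the corollary from Theorem~\ref{thm:gsp_peak}: it suffices to verify that the edge-fair rule always selects an allocation in $PO^*$ and that it is strongly invariant. The first fact is immediate from Lemma~\ref{lem1} and Proposition~\ref{prop:pareto}, since the edge-fair rule selects a maximum flow, and a maximum flow is feasible (so $x\le s$, $y\le d$) and by Lemma~\ref{lem1} satisfies the remaining requirements of Proposition~\ref{prop:pareto}, so its net transfers lie in $PO^*$. For strong invariance, symmetry of the rule (Theorem~\ref{thm:edge_csy}) lets us argue only about a single supplier $i$, the demander case being identical; and since every allocation in $PO^*$ has $x_i\le s_i$, only the case ``$s[R_i]>x_i(R)$ and $s[R_i']\ge x_i(R)$'' of the definition is non-vacuous. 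So fix a profile $(s,d)$, let $z$ be the edge-fair flow (the unique lex-optimal maximum flow), assume $x_i(z)<s_i$, and let $i$ report $s_i'\ge x_i(z)$, giving the profile $(s',d)$. I will show the edge-fair flow for $(s',d)$ is again $z$ — which is strong invariance in its strongest form, as not a single edge-flow, hence not a single agent's allocation, changes.

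Two easy observations place $z$ in the new max-flow polytope. First, $z$ is feasible for $(s',d)$: the only constraint that changed is $x_i\le s_i'$, and $x_i(z)\le s_i'$ by hypothesis. Second, $z$ is a maximum flow for $(s',d)$: the maximum-flow value, as a function of the supply at $i$ with all other data fixed, is concave, nondecreasing and piecewise linear, and because the constraint $x_i\le s_i$ is slack at $z$ this function is constant on $[x_i(z),s_i]$, hence — by concavity and monotonicity — constant on $[x_i(z),\infty)$; so the maximum-flow value is unchanged and $z$, attaining it, is a maximum flow for $(s',d)$. Writing $P:={\cal F}^*(s,d)$ and $P':={\cal F}^*(s',d)$, we thus have $z\in P\cap P'$.

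Let $z'$ be the edge-fair flow for $(s',d)$, i.e.\ the lex-optimal maximum flow in $P'$. Since $z\in P'$, the increasingly-sorted vector of $z'$ lexicographically dominates that of $z$. If $x_i(z')\le s_i$, then $z'$ is feasible for $(s,d)$ and attains the maximum-flow value, so $z'\in P$; since $z$ is lex-optimal in $P$, the sorted vector of $z$ in turn lex-dominates that of $z'$, the two sorted vectors coincide, $z'$ is a lex-optimal maximum flow in $P$, and $z'=z$ by uniqueness. Suppose instead $x_i(z')>s_i$; this forces $s_i'>s_i$, so $P=P'\cap\{x_i\le s_i\}$. Consider the segment $w_t:=(1-t)z+tz'$, $t\in[0,1]$, which lies in $P'$ by convexity of the max-flow polytope; since $x_i(w_t)$ runs linearly from $x_i(z)<s_i$ to $x_i(z')>s_i$, there is $t_0\in(0,1)$ with $x_i(w_{t_0})=s_i$, whence $w_{t_0}\in P'\cap\{x_i\le s_i\}=P$. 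Because the leximin order is quasiconcave along line segments, the sorted vector of $w_{t_0}$ lex-dominates the lex-smaller of the sorted vectors of $z$ and $z'$, namely that of $z$; but $z$ is lex-optimal in $P$ and $w_{t_0}\in P$, so these sorted vectors coincide and $w_{t_0}=z$ by uniqueness — contradicting $t_0>0$ and $z'\neq z$ (the latter holds since $x_i(z')>s_i>x_i(z)$). Hence $x_i(z')>s_i$ is impossible and $z'=z$ in every case, establishing strong invariance; Theorem~\ref{thm:gsp_peak} then gives the corollary.

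I expect the final paragraph to be the main obstacle. Reporting $s_i'>s_i$ can genuinely enlarge the set of maximum flows, even though it never enlarges the maximum-flow value, so the argument really must exclude the possibility that one of the new maximum flows — all of which route more than $s_i$ through $i$ — is more lex-balanced than $z$. Combining quasiconcavity of leximin with uniqueness of the lex-optimal flow dispatches this; the only ingredient beyond the results cited above is the standard fact that, for a fixed vector $c$, the set of vectors whose increasing rearrangement weakly lex-dominates $c$ is convex.
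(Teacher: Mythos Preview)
Your proof is correct, and it follows the same high-level strategy as the paper: verify that the edge-fair rule selects from $PO^*$ and is strongly invariant, then invoke Theorem~\ref{thm:gsp_peak}. Where you diverge from the paper is in how you establish strong invariance.

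The paper's argument is algorithmic: it simply asserts that when $i\in S_{-}$ has $x_i<s_i$ and reports $s_i'\ge x_i$, ``the decomposition and maximum flow does not change, and the overall allocation does not change either: all the breakpoints in the edge-fair algorithm for the two cases are identical.'' This is terse and leans on the reader to see that the supply constraint at $i$ is never active at any stage of the iterative LP procedure. Your argument is instead \emph{variational}: you work directly with the definition of the edge-fair flow as the unique lex-optimal element of the max-flow polytope, show that $z$ remains a maximum flow under the new report (via concavity and monotonicity of the max-flow value in $s_i$), and then rule out $z'\ne z$ by combining convexity of the max-flow polytope with convexity of the leximin upper contour set $\{w:\text{sorted}(w)\succeq_{\text{lex}}\text{sorted}(z)\}$ and uniqueness of the lex-optimum. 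This is more work, but it is also more transparent: it does not depend on the particular sequence of LPs the algorithm solves, only on the characterization of its output. Your treatment of the case $s_i'>s_i$ (where $P'\supsetneq P$ is genuinely possible) is a point the paper glosses over entirely, and your convex-combination-plus-uniqueness argument handles it cleanly.
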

{\bf Proof.}
It is clear that the edge-fair rule always picks an allocation from $PO^*$.
By Theorem~\ref{}, the result follows if we show that the rule satisfies 
strong invariance.
Following the arguments in the proof of that Theorem, it is enough to consider
an agent $i \in S_{-}$ such that $x_{i} < s_{i}$ and $s'_{i} \geq x_{i}$. 
In this case, the decomposition and maximum flow does not change, and the
overall allocation does not change either: all the breakpoints in the edge-fair
algorithm for the two cases are identical.
\qed

Note that the Egalitarian mechanism is group strategyproof for the same reason. 
This characterization identifies a class of peak group strategyproof mechanisms. 

\subsection{Ranking}

One notion of fairness is that suppose two agents with different peaks have identical connections, then the agent with 
higher peak should have higher net allocation. This is true for the uniform rule where there is only 1 type of divisible good.
This can be formalized in the following way for a general bipartite graph discussed here: ( A similar statement can be made about the 
demanders)
\begin{enumerate}
 \item \textbf{Ranking} (RK) : $s_{i} \leq s_{j} \implies x_{i} \leq x_{j} \forall \hspace{2mm}  i,j \hspace{2mm} \text{such that} \hspace{2mm} f(i) = f(j)$ 
 \item \textbf{Ranking*} (RK*): $s_{i}  \leq s_{j}  \implies s_{i} - x_{i} \leq s_{j} - x_{j} \hspace{2mm} \forall i,j \hspace{2mm} \text{such that} \hspace{2mm} f(i) = f(j)$ 
\item more edges more preference
\end{enumerate}

We start with a proof of statement (i). Suppose $x_{i} > x_{j}$, we show a transfer from agent $i$ to agent $j$ is possible and 
contradicts the lexicographic solution on the edges. 
% A Pigou Dalton transfer from $x_{i}$ to $x_{j}$ transforms $x$ into $x'$ such that $x_{i}' = x_{i} - \epsilon$
% and $x_{j}' = x_{j} + \epsilon$, $x_{l}' = x_{l}$ for all other agents.
% Now consider, agents $k$ denoted by $A = \{k \in f(i) | z_{ik} \neq z_{jk}\} $. 
Construct a new solution $x'$ such that $z'_{kl} = z_{kl} \hspace{2mm} \forall \hspace{2mm}  k \in S \backslash \{i,j\}, \hspace{2mm} l \in f(k), \hspace{2mm}  
z'_{il} = z'_{jl} = \frac{z_{il} + z_{jl}}{2} \hspace{2mm} \forall \hspace{2mm} l \in f(i)$. The allocation $x'$ is clearly feasible and $x$ does not lexicographic dominate $x'$. Hence, we arrive at a contradiction. Using the similar idea of routing the flows from agent $i$ to agent $j$ and by contradiction we can prove statement (ii).

\subsection{Extensions of Uniform Rule}
Both Egalitarian and Edge fair are extensions of the uniform rule whereas edge fair
is a consistent extension of the uniform rule.

\section{Model 2: Agents on Edges}
\label{s:model2}

As in Section~\ref{s:model1},
we consider the problem of transferring a single commodity from
the set $S$ of suppliers to the set $D$ of demanders using
a set $E$ of edges: each edge $e = (i,j)$ links a distinct
supplier-demanded pair.
However, here we think of the supplier and demander
nodes as passive, whereas each edge $e$ is controlled by a distinct
agent who has single-peaked preferences $R_e$ over the amount of flow
on edge $e$. We think of the ``peak'' $u_e$ of his preference
relation as the capacity of the associated edge.
We write $\mathcal{%
R}$ for the set of single peaked preferences over $\mathbb{R}_{+}$, and $%
\mathcal{R}^{E}$ for the set of preference profiles.
Transfer of the commodity is allowed between
supplier $i$ and demander $j$ only if $(i,j) \in E$.
We let $G = (S \cup D, E)$ be the natural bipartite graph.

As before we focus our attention on 
{\em peak} only mechanisms: in a such a mechanism, the flow depends on
the preferences of the agents only through their peaks, so we could
simply ask each agent $e$ to report their peak $u_e$. We assume that
the supplies $s_i$ and demands $d_j$ are fixed, and the only varying
quantity are the reported peaks (equivalently, edge-capacities).

\paragraph{Pareto flows.}
The set of Pareto efficient
allocations can be complicated because of the peaks of the edge-agents.
For example, suppose there are two suppliers $\{a,b\}$, two demanders
$\{c,d\}$, and edges $\{(a,c), (a,d), (b,d)\}$. Suppose all peaks are
1. Then the flow given by sending 1 unit of flow along the edge $(a,d)$
is Pareto optimal; as is the flow given by sending a unit along each 
of the edges $(a,c)$ and $(b,d)$. In the latter flow 2 units are sent
from the supply to demand nodes whereas only 1 unit is transferred
in the former.

%-----Every pareto flow does not allocate maximum flow

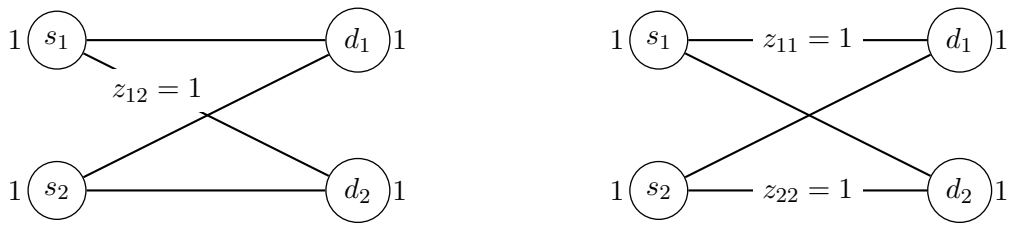
\begin{figure}
 \begin{center}
  \begin{tikzpicture}
   \GraphInit[vstyle=Normal]

%--------------------fig - left

%----suppliers bottom

	\Vertex[LabelOut,Lpos=180, x=0 ,y=0]{1}
 	\Vertex[x=0 ,y=0,Math,L=s_2]{s2}%VERTEXLABEL

 	\Vertex[LabelOut,Lpos=180, x=0 ,y=2]{1}
	\Vertex[x=0 ,y=2,Math,L=s_1]{s1}

%---------------- demanders bottom

	\Vertex[LabelOut,Lpos=0, x=4,y=0]{1}
 	\Vertex[x=4 ,y=0,Math,L=d_2]{d2}%VERTEXLABEL

	\Vertex[LabelOut,Lpos=0, x=4 ,y=2]{1}
 	\Vertex[x=4 ,y=2,Math,L=d_1]{d1}%VERTEXLABEL

     \Edges(s1,d1)
     \Edges[label={$z_{12}=1$}, style={pos=0.3}](s1,d2)
     \Edges(s2,d2)
     \Edges(s2,d1)

%--------------------fig - right

%----suppliers bottom

	\Vertex[LabelOut,Lpos=180, x=8 ,y=0]{1}
 	\Vertex[x=8 ,y=0,Math,L=s_2]{s2}%VERTEXLABEL

 	\Vertex[LabelOut,Lpos=180, x=8 ,y=2]{1}
	\Vertex[x=8 ,y=2,Math,L=s_1]{s1}

%---------------- demanders bottom

	\Vertex[LabelOut,Lpos=0, x=12,y=0]{1}
 	\Vertex[x=12 ,y=0,Math,L=d_2]{d2}%VERTEXLABEL

	\Vertex[LabelOut,Lpos=0, x=12,y=2]{1}
 	\Vertex[x=12,y=2,Math,L=d_1]{d1}%VERTEXLABEL

     \Edges[label = {$z_{11}=1$}](s1,d1)
     \Edges(s1,d2)
     \Edges[label = {$z_{22}=1$}](s2,d2)
     \Edges(s2,d1)

	\end{tikzpicture}
   \caption{Every Pareto flow does not allocate a maximum flow} 
   \label{fig2:model2_pareto}
  \end{center}
\end{figure}

%-----------------------------------

In contrast to model 1, therefore, it is possible
that a Pareto optimal flow does not result in a maximum-flow from supply to demand nodes.
For that reason, we assume that the planner implements a max-flow in the given problem
$(G,s,d,u)$, and we consider the question of how this max-flow is
distributed across the edge-agents. In other words, we
focus on the fair division of a max-flow, interpreting max-flow
as a design constraint. Let $\mathcal{F}$ be the set of max-flows.

Restricting ourselves only to max-flows, it is easy to see that the Pareto
set is convex: the average of any two max-flows is itself a max-flow. In
contrast to model 1, any change in flow along an edge affects the agent's
utility directly; in model 1, because the agents were located at the nodes,
it is possible for different edge-flows to give the same allocation to the set
of agents. This implies that every element of $\mathcal{F}$ is a Pareto
allocation.

It is natural to try to formulate this ``edge''-flow problem as a bipartite
rationing problem on an auxiliary graph. For example, consider the Gallai-Edmonds
decomposition for the given network $(G,s,d,u)$, and suppose the partitions are
$S_{+}, S_{-}$ for the suppliers, and $D_{+}, D_{-}$ for the demanders. From
the GE decomposition, every edge between $S_{-}$ and $D_{-}$ carries flow equal
to capacity, so their allocation if fixed in all solutions in $\mathcal{F}$;
likewise for all edges between $S_{+}$ and $D_{+}$. This suggests the following
idea: create a bipartite graph with one node on the left for each edge, and
one node on the right for each element of $S_{+} \cup D_{+}$; each edge that
still remains is incident to either $S_{+}$ or $D_{+}$, but not both; moreover,
the given problem is a {\em rationing} problem in the sense that the nodes
on the right must be fully allocated. Thus it appears that we have rewritten
the flow problem as a bipartite rationing problem of the sort considered in
Section~\ref{s:model1}. That this analogy must be wrong is implied
by the following result.

\begin{prop}
There is no Lorenz Dominant allocation among the edge flows in the set $\mathcal{F}$
\end{prop}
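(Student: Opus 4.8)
The plan is to prove this non-existence statement by exhibiting a single small instance in which the set $\mathcal{F}$ of max-flows is a nondegenerate one-parameter family, and in which the flow that maximizes one partial sum of the sorted flow vector is incompatible with the flow that maximizes a later one. Concretely, I would take $G$ to be the complete bipartite network on suppliers $\{a,b\}$ and demanders $\{c,d\}$ (all four edges present, all capacities infinite), with $s_a = 2$, $s_b = 1$, $d_c = 1$, $d_d = 2$. Total supply equals total demand ($=3$) and the complete bipartite graph permits routing all of it, so every max-flow has value $3$ and hence saturates every node; equivalently, the Gallai--Edmonds decomposition of Lemma~\ref{lem1} has $S_+ = S$ and $D_+ = D$. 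Writing $\alpha := \varphi_{ac}$, node-saturation forces $\varphi_{ad} = 2-\alpha$, $\varphi_{bc} = 1-\alpha$, $\varphi_{bd} = \alpha$, and conversely every such $\varphi$ with $\alpha \in [0,1]$ is feasible and has value $3$. Hence $\mathcal{F} = \{\varphi(\alpha) : \alpha \in [0,1]\}$.

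Next I would record the two partial sums of the ascending-sorted flow vector $(\alpha,\,2-\alpha,\,1-\alpha,\,\alpha)$ that do the work. Since $2-\alpha \ge 1$ for $\alpha \in [0,1]$, the largest coordinate is always $2-\alpha$ and the smallest is $\min(\alpha,1-\alpha)$. Thus the smallest coordinate, $L_1(\alpha) = \min(\alpha,1-\alpha)$, is maximized over $[0,1]$ uniquely at $\alpha = \tfrac12$, with value $\tfrac12$; while the sum of the three smallest coordinates, $L_3(\alpha) = 3-(2-\alpha) = 1+\alpha$, is maximized over $[0,1]$ at $\alpha = 1$, with value $2$, and equals only $\tfrac32$ at $\alpha = \tfrac12$.

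To finish, suppose for contradiction that some $\varphi(\alpha^\ast)\in\mathcal{F}$ Lorenz dominates every element of $\mathcal{F}$. Then $\varphi(\alpha^\ast)$ must simultaneously maximize every partial sum of the sorted flow vector. Maximizing the smallest coordinate forces $\alpha^\ast = \tfrac12$; but then the sum of the three smallest coordinates of $\varphi(\alpha^\ast)$ equals $\tfrac32$, which is strictly less than the corresponding quantity $2$ for $\varphi(1)$. Hence $\varphi(\tfrac12)$ does not Lorenz dominate $\varphi(1)$, a contradiction, so no Lorenz dominant allocation exists in $\mathcal{F}$.

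I expect the only genuine subtlety to lie in the choice of instance. Small bipartite networks are typically rigid — the max-flow is unique unless the support graph contains a $4$-cycle — so $K_{2,2}$ is essentially forced as the underlying graph; and even then, symmetric choices of supplies and demands (for instance all peaks equal) accidentally make the minimizer of the largest coordinate coincide with the maximizer of the smallest, so that a Lorenz dominant flow does exist. Breaking this coincidence is exactly why I take $s_a \neq s_b$ and $d_c \neq d_d$. Once the instance is fixed, verifying feasibility of $\varphi(\alpha)$, identifying the sorted order of its coordinates, and reading off $L_1$ and $L_3$ are all routine.
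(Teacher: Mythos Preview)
Your proposal is correct and follows essentially the same approach as the paper: exhibit a small $K_{2,2}$ instance in which two max-flows are Lorenz-incomparable. The paper uses the asymmetric data $(s_1,s_2,d_1,d_2)=(3,6.1,4.4,6.6)$ and simply displays the lex-optimal flow together with one other max-flow that it neither dominates nor is dominated by; your instance $(2,1,1,2)$ is cleaner, and your explicit one-parameter description of $\mathcal{F}$ together with the $L_1$/$L_3$ incompatibility makes the argument more self-contained, but the underlying idea is identical.
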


{\bf Proof.} Consider the network of Figure~\ref{fig:model2_LD}. The actual network
is shown in Figure~(a) and the lexicographic solution is shown in~(b).
However, the solution 
$\phi_{c} := \{ z_{11} = 1.4, z_{12} = 1.6, z_{21} = 3, z_{22} = 3.1\}$
is also a maximum-flow; the lex-solution does not dominate this flow, nor
is it dominated by this one. 
\qed

%----- absence of lorenz dominance

\begin{figure}
 \begin{center}
  \begin{tikzpicture}
   \GraphInit[vstyle=Normal]

%--------------------fig - left

%----suppliers bottom

	\Vertex[LabelOut,Lpos=180, x=0 ,y=0]{6.1}
 	\Vertex[x=0 ,y=0,Math,L=s_2]{s2}%VERTEXLABEL

 	\Vertex[LabelOut,Lpos=180, x=0 ,y=2]{3}
	\Vertex[x=0 ,y=2,Math,L=s_1]{s1}

%---------------- demanders bottom

	\Vertex[LabelOut,Lpos=0, x=4,y=0]{6.6}
 	\Vertex[x=4 ,y=0,Math,L=d_2]{d2}%VERTEXLABEL

	\Vertex[LabelOut,Lpos=0, x=4 ,y=2]{4.4}
 	\Vertex[x=4 ,y=2,Math,L=d_1]{d1}%VERTEXLABEL

     \Edges(s1,d1)
     \Edges(s1,d2)
     \Edges(s2,d2)
     \Edges(s2,d1)

%--------------------fig - right

%----suppliers bottom

	\Vertex[LabelOut,Lpos=180, x=8 ,y=0]{6.1}
 	\Vertex[x=8 ,y=0,Math,L=s_2]{s2}%VERTEXLABEL

 	\Vertex[LabelOut,Lpos=180, x=8 ,y=2]{3}
	\Vertex[x=8 ,y=2,Math,L=s_1]{s1}

%---------------- demanders bottom

	\Vertex[LabelOut,Lpos=0, x=12,y=0]{6.6}
 	\Vertex[x=12 ,y=0,Math,L=d_2]{d2}%VERTEXLABEL

	\Vertex[LabelOut,Lpos=0, x=12,y=2]{4.4}
 	\Vertex[x=12,y=2,Math,L=d_1]{d1}%VERTEXLABEL

     \Edges[label = {$z_{11}=1.5$}](s1,d1)
     \Edges[label = {$z_{12}=1.5$},style={pos=0.3}](s1,d2)
     \Edges[label = {$z_{22}=3.2$}](s2,d2)
     \Edges[label = {$z_{21}=2.9$}, style={pos=0.3}](s2,d1)

	\end{tikzpicture}
   \caption{Absence of Lorenz Dominance element in Model 2} 
   \label{fig:model2_LD}
  \end{center}
\end{figure}
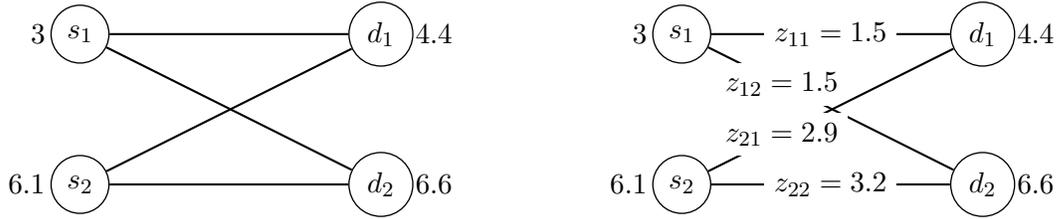

%-----------------------------------

\bigskip

{\bf Remark.}
If we draw the bipartite graph suggested in the discussion before the
statement of the proposition, and treat it as a bipartite rationing
problem, we find that edges $(1,1)$ and $(2,1)$
will carry a flow of 1.5 and 3.05 units each, and this exceeds the
total demand at $D1$. These implied ``side-constraints'' are not accounted
for in translating the given problem to a bipartite rationing problem.

\paragraph{Allocation Rules.}
We can apply the edge-fair rule discussed earlier on this model as well.
The edge-fair rule finds a lex-optimal max-flow. It is clear that the
rule is also edge consistent. Our next result shows that every
edge-consistent rule is also group strategyproof.

\begin{theorem}
Fix a graph $G$ with the supply vector $s$ and demand vector $d$. Suppose
we have an allocation rule that maps reports of edge-capacities to a flow.
Every edge-consistent allocation rule is group strategyproof.
\end{theorem}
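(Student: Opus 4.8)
The plan is to mimic the proof of Theorem~\ref{thm:gsp_peak}: first reduce group strategy-proofness to a purely combinatorial property of the flow selection, and then run a minimal-counterexample argument in which edge-consistency is used to delete edges.

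First observe that the rule is automatically Pareto optimal, since (as noted above) every max-flow of the reported problem is a Pareto allocation once the agents sit on the edges. Because the rule is peak-only, preferences are single-peaked, and the truthful outcome $z = z(u)$ always satisfies $z_e \le u_e$ on every edge $e$, a routine case check on single-peaked preferences shows that the rule is group strategy-proof \emph{if and only if} the following holds: for every capacity profile $u$, every coalition $A \subseteq E$, and every joint misreport $u'$ (with $u'_e = u_e$ for $e \notin A$), no agent $e \in A$ with $z_e(u) < u_e$ has $z_e(u') > z_e(u)$. (An agent whose truthful flow already equals his peak is at his optimum and imposes no constraint; for the others, ``weakly worse for every single-peaked preference with peak $u_e$'' is the same as ``not strictly larger''.) So it suffices to establish this claim.

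Suppose it fails and choose a counterexample $(G,s,d,u)$, $A$, $u'$ with $|E|$ as small as possible; write $z = z(u)$, $z' = z(u')$, and fix $e^* \in A$ with $z_{e^*}(u) < u_{e^*}$ and $z'_{e^*} > z_{e^*}$. We may take $A$ to be exactly the agents who genuinely misreport, together with $e^*$. The key reduction: if some edge $f$ carries the same flow in $z$ and in $z'$, delete it. By edge-consistency the rule's output on the reduced problem is $z$ (respectively $z'$) restricted to $E \setminus \{f\}$; and since $z_f = z'_f$, the adjusted supply/demand data of the two reduced problems coincide, so the reduced instance --- with coalition $A \setminus \{f\}$ --- is again a counterexample with one fewer edge. (Here $f \ne e^*$ because $e^*$'s flow changed; if $|A| \ge 2$ the agent $e^*$ still witnesses the violation after the deletion, and the case $|A| = 1$ is just strategy-proofness, i.e.\ the instance $A = \{e^*\}$, which is handled by the argument below in any case.) This contradiction forces $z_f \ne z'_f$ on \emph{every} edge $f$ of a minimal counterexample.

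It remains --- and this is the hard part --- to contradict ``every edge flow changes''. Here I would invoke Lemma~\ref{lem1}: let $(S_+,S_-,D_+,D_-)$ and $(S'_+,S'_-,D'_+,D'_-)$ be the Gallai--Edmonds decompositions for $u$ and $u'$. Every edge of $G(S_-,D_-)$ is pinned at its capacity in $z$; those lying outside $A$ keep the same capacity, hence are strictly unsaturated in $z'$ and so leave $G(S'_-,D'_-)$ --- the decomposition genuinely moves, and there is a nonempty ``boundary'' set of edges across which membership in the partition changes. The strategy is then to argue, by the same kind of inequality chains as (\ref{eq:2glbg})--(\ref{eq:2gubbg}) in the proof of Theorem~\ref{thm:gsp_peak}, that the region of the network whose decomposition-membership changes is closed under the flow in \emph{both} $z$ and $z'$ --- every edge leaving it is pinned (saturated, or zero in the appropriate direction) in both flows --- so that deleting those boundary edges through edge-consistency splits this region off as a \emph{strictly smaller} instance which still contains $e^*$ with its flow strictly increased, contradicting minimality. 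I expect the genuine difficulty to be exactly this separation step: forcing the relevant sums of edge-flows on the two sides of the cut to be equal so that the region really decouples, and checking that $e^*$ survives inside the smaller instance. Once that is done the proof is complete; note that, unlike in Model~1, no extra ``strong invariance'' axiom is needed here, because Pareto optimality comes for free and edge-consistency alone plays the role that strong invariance played in Theorem~\ref{thm:gsp_peak}; in particular the edge-fair rule is group strategy-proof in this model.
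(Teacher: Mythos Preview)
Your approach diverges substantially from the paper's, which is much shorter and never invokes the Gallai--Edmonds decomposition. The paper argues directly: any agent $e$ in a weakly improving coalition must over-report, $u'_e \ge z_e$ (else $z'_e \le u'_e < z_e$); those agents with $z_e = u_e$ are already at their peak and so must keep $z'_e = u_e = z_e$; edge-consistency now lets one strip all of these edges from \emph{both} the truthful and the misreported problem --- the adjusted supply/demand data coincide because $z = z'$ on the removed edges --- leaving a reduced instance in which every surviving edge has $z_e < u_e$. The final step is simply that raising the capacity of an unsaturated edge leaves the rule's output unchanged, so $z' = z$ on the reduced instance and no one gains.

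That said, your ``hard part'' is not merely hard; as stated it cannot be completed, and in fact the paper's own final step closes only for the edge-fair rule (it explicitly says ``the bottleneck points remain the same when the edge fair rule is applied''). For a bare edge-consistent rule there is no reason the selection should ignore non-binding capacity data. Concretely: take one supplier with $s=1$, two demanders with $d_1=d_2=1$, and the two edges $e_1,e_2$. Any max-flow selection on this $G$ is automatically edge-consistent, because each one-edge reduced problem has a unique max-flow. So the rule that, for $u_1,u_2\ge 1$, picks $z_1 = u_1/(u_1+u_2)$ is edge-consistent yet not strategy-proof --- agent~$e_1$ with true peak~$1$ lifts his share from $1/2$ to $2/3$ by reporting $u'_1 = 2$. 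In this example the decomposition is $S_+ = \{s\}$, $D_- = \{d_1,d_2\}$ under both $u$ and $u'$, so the ``boundary region'' you plan to cut along is empty and your separation step has nothing to act on.

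The upshot: the statement really needs an extra hypothesis --- essentially the edge analogue of strong invariance, or that the selection depends only on the max-flow polytope and not on the raw capacities. With that hypothesis in hand, the paper's short endgame goes through and your minimal-counterexample machinery is unnecessary.
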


\noindent
{\bf Proof.} 
Consider a coalition of agents $A = \{e \in E | u_{e}' \neq u_{e}\}$, i.e., they misreport 
their true peaks.
Let the misreported profile be denoted by 
$R' \in \mathcal{R}^{|E|}$ and the resulting network by $G'$. Note that the edge-fair rule 
always results in an allocation $z \leq u$, hence any
agent $e \in A$ should report $u'_{e} \geq z_{e}$; 
otherwise, $z'_{e} \leq u'_{e} < z_{e}$ and the agent 
$e$ is worse off in profile $R'$.
Let $B: = \{e \in A | z_{e} = u_e \}$. The agents in $B$ 
should have the allocation $z_{e}' = u_{e}$ when the reports are $R'$
as every such agent received their peak allocation in 
profile $R$. 
Consider the graph $ G \setminus B$ by removing the agents in
$B$ to form the reduced graph 
$(G \setminus B, s_{G \setminus B},d_{G \setminus B},u')$,
where $s_{G \setminus B},d_{G \setminus B}$ are the adjusted peaks
of supply and demand nodes respectively after fixing the flow 
on the agents in $B$.

By edge-consistency of the rule, the allocation
$z'_{e} = z'_{e}(-B)$ for all $e \in G \setminus B$.

From the discussion above, the report of every 
agent $e \in  G \backslash B$
is such that $c'_{ij} \geq z_{ij}$. Also, note that $z_{ij} < u_{ij} \hspace{2mm} \forall \hspace{2mm} ij \in  G \backslash B$. 

By increasing the capacity of an unsaturated edge, the total value of the maximum flow does not change and the bottleneck points remain the same when the edge fair rule is 
applied to components $(G \backslash B, s_{G \backslash B},d_{G \backslash B},c')$ and $(G \backslash B, s_{G \backslash B},d_{G \backslash B},c)$. Hence,
every agent $ij \in A$ receives the allocation $z'_{ij} = z_{ij}$. 
\qed

\bigskip

We next turn to equity properties of allocations and allocation rules. 
Given that different edges may connect possibly different suppliers and
demanders who may have supply or demand different amounts of the commodity,
one has to be careful in formulating these notions. Following
Bochet et al.~\cite{bims}, we formulate these properties for a pair of
agents (equivalently, edges). In general these properties take the
following form: Fix a problem $(G,s,d,u)$, and consider the allocation 
$z$ given by a rule $\varphi$. For every pair of edges $e$ and $e'$, 
fix the flows on all edges other than $e$ and $e'$ and ask if there is 
a ``better'' feasible flow in $\mathcal{F}$.

An allocation is envy-free if whenever $e$ prefers $z_{e'}$ to $z_{e}$ (for some
agents $e$ and $e'$), there is no other allocation 
$\hat{z} \in 
\{ z' \in \mathcal{F} \mid z'_{f} = z_{f}, \;\; \forall f \not = e, e'
\;\; \}$
such that $e$ prefers $\hat{z}$ to $z$. An allocation $z$ satisfies equal treatment
of equals (ETE) if for each $e$ and $e'$ with $u_e = u_{e'}$, there is no
other allocation 
$\hat{z} \in 
\{ z' \in \mathcal{F} \mid z'_{f} = z_{f}, \;\; \forall f \not = e, e'
\;\; \}$
with $|\hat{z}_{e}-\hat{z}_{e'}| < |z_e - z_{e'}|$.

The following result shows the relationship between these two properties.
\begin{prop} 
Consider the problem $(G,s,d,u)$ and an allocation rule $z$ that makes a
selection from the Pareto set $\mathcal{F}$. If $z$ is envy-free it satisfies
ETE.
 \end{prop}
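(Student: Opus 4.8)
The plan is to argue by contradiction, following the structure of Proposition~5 of Bochet et al.~\cite{bims}, but exploiting a feature peculiar to Model~2: since the peak $u_e$ of an edge-agent is by definition the capacity of edge $e$, every feasible flow $z$ satisfies $z_e \le u_e$ for all $e$. This plays exactly the role that the restriction to $PO^{*}$ played in Model~1, and it is what keeps the argument short.

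So suppose $z \in \mathcal{F}$ is envy-free but violates ETE. Then there are edge-agents $e,e'$ with a common peak $u := u_e = u_{e'}$ and an allocation $\hat z$ in the set $\{z' \in \mathcal{F} : z'_f = z_f \text{ for all } f \ne e,e'\}$ with $|\hat z_e - \hat z_{e'}| < |z_e - z_{e'}|$; in particular $z_e \ne z_{e'}$, and after relabeling we may assume $z_e < z_{e'}$. Both $z$ and $\hat z$ are max-flows that agree on every edge other than $e$ and $e'$, so they transfer the same total amount; hence $\hat z_e + \hat z_{e'} = z_e + z_{e'}$. Writing $m := (z_e + z_{e'})/2$, the inequality $|\hat z_e - \hat z_{e'}| < z_{e'} - z_e$ becomes $|\hat z_e - m| < m - z_e$, i.e.\ $z_e < \hat z_e < z_{e'}$ (and symmetrically $z_e < \hat z_{e'} < z_{e'}$): passing from $z$ to $\hat z$ just moves flow from the richer of the two edges toward the poorer one.

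Now invoke the feasibility observation above: $z_{e'} \le u_{e'} = u$, so $z_e < \hat z_e < z_{e'} \le u$. Since $R_e$ is single-peaked with peak $u$, it is strictly increasing up to $u$; therefore $z_{e'} P_e z_e$ — that is, agent $e$ envies $e'$ under $z$ — and also $\hat z_e P_e z_e$. Since $\hat z$ belongs to $\{z' \in \mathcal{F} : z'_f = z_f \text{ for all } f \ne e,e'\}$, this directly contradicts envy-freeness of $z$, and the proof is complete. The statement with the roles of $e$ and $e'$ (equivalently, of suppliers' and demanders' edges) interchanged needs no separate treatment, since we already fixed $z_e < z_{e'}$.

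The one place to be careful is precisely the reduction to $z_{e'} \le u$. Without the peak-equals-capacity convention one would have to confront the configuration $z_e < u < z_{e'}$, where single-peakedness alone does not determine which of $e, e'$ (strictly) envies the other, and where envy-freeness — being a strict-preference requirement — can be vacuous for the pair (one can in fact build envy-free allocations violating ETE in that regime). In Model~2 that configuration cannot occur, which is exactly why the elementary argument above suffices; this is the analogue of how the $PO^{*}$ hypothesis was used in the corresponding Model~1 result.
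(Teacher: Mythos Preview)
Your proof is correct and follows essentially the same route as the paper: argue by contradiction, use that feasibility forces $z_{e'}\le u_{e'}=u$ (the Model~2 analogue of the $PO^{*}$ hypothesis), and then invoke single-peakedness to exhibit a violation of no-envy via the witnessing $\hat z$. Your presentation is in fact a little cleaner than the paper's: by observing directly that $\hat z_e+\hat z_{e'}=z_e+z_{e'}$ together with $|\hat z_e-\hat z_{e'}|<z_{e'}-z_e$ forces $\hat z_e\in(z_e,z_{e'})$, you avoid the case split on the relative order of $\hat z_e$ and $\hat z_{e'}$ that the paper carries out (and the averaging step it uses to reduce the second case to the first).
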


\noindent
{\bf Proof.}
We mimic the proof in Bochet et al. ~\cite{bims} here. Suppose the rule $z$ violates ETE, we would like 
to show it violates No Envy or the flow is $\notin$ PO$^{*}$. Fix a profile $R^{E}$ and two edge agents $e$ and $e'$ such that
$u_{e}[R_{e}] = u_{e'}[R_{e'}] = c^{*}$ and suppose there exists $z'$ satisfying the definition above . Now, we have that, $z'_{e} + z'_{e'}
= z_{e} + z_{e'}$ because $z$ and $z'$ coincide on $E \backslash \{e, e'\}$. Assume without loss $z_{e}(R) < z_{e'}(R)$, then only two 
cases are possible: $z_{e}(R) < z'_{e} \leq z'_{e'} < z_{e'}(R)$ or $z_{e}(R) < z'_{e'} \leq z_{e} < z_{e'}(R)$. \\
Assume first case: $c^{*} \geq z_{e'}(R)$ implies a violation of No Envy. Now in case (ii), the allocation $z''_{e}  = \frac{z_{e}+z'_{e}}{2} 
\hspace{2mm} \forall ij \in E$ is such that $z''_{e} \in PO^{*}$ and we are in case (i) again.
\qed

By construction, the edge-fair rule selects a maximum flow allocation from the Pareto set. The edge-fair rule also 
finds an envy-free allocation. 
% Edge fair rule produces a a maximum flow solution is true from the construction of the rule.
Define the set of agents $A:=\{e| e \in E, z_{e} > 0 \}$, $B:=\{e| e \in E, z_{e} = 0 \}$, $E = A \cup B$. If $z_{e} > 0 $
under the edge fair rule, then the agent $e$ carries a \emph{positive} flow in some maximum flow solution. Similarly, $e \in B$ 
do not carry a positive flow in any maximum flow solution. So even if $z_{e}R_{e'}z_{e'}$ for some $e' 
\in B, e \notin B$, there is no maximum flow solution in which $z_{e'} > 0$ to possibly redistribute and improve the allocation of agent $e'$.
On the other hand, $e',e \in A$ implies $e$ is in a higher bottleneck set than $e'$ since the allocation rule is monotone. Suppose,
there is envy through the solution $z'$, consider the solution $\frac{z' + z}{2}$, which is still feasible because the set $\mathcal{F}$ is convex.
This is a contradiction to the earlier obtained solution of the LP at the step when $e'$ was a bottleneck. Hence, edge fair satisfies no envy in this model
thus treats equals equally.

\newpage
\nocite{*}
\bibliographystyle{plain}
\bibliography{refs}

\end{document}